\documentclass[a4paper,reqno]{paper}

\usepackage{amsfonts,amsthm,url}
\usepackage{amsmath}
\usepackage{amssymb}
\usepackage{colortbl}
\usepackage{array}

\usepackage{pstricks}
\usepackage{subcaption}
\usepackage{graphicx}

\usepackage{geometry}
\geometry{ hmargin=2.4cm, vmargin=2cm }

\definecolor{Green}{cmyk}{1.,0,1.,0} 
\definecolor{Blue}{cmyk}{1.,1.,0,0} 
\definecolor{Orange}{cmyk}{0,0.61,0.87,0} 
\definecolor{Red}{cmyk}{0,1.,1.,0} 
\definecolor{ForestGreen}{cmyk}{0.91,0,0.88,0.12} 
\definecolor{Plum}{cmyk}{0.50,1.,0,0} 
\definecolor{YellowOrange}{cmyk}{0,0.42,1.,0} 

\newtheorem{Th}{Theorem}[section]
\newtheorem{Lem}[Th]{Lemma}
\newtheorem{Cor}[Th]{Corollary}
\newtheorem{Prop}[Th]{Proposition}

\theoremstyle{definition}
\newtheorem{Def}[Th]{Definition}
\newtheorem{Exa}[Th]{Example}

\theoremstyle{remark}
\newtheorem{Rem}[Th]{Remark}

\setcounter{secnumdepth}{2}

\newcommand{\cA}{\mathcal{A}}
\newcommand{\cC}{\mathcal{C}}
\newcommand{\cF}{\mathcal{F}}
\newcommand{\cH}{\mathcal{H}}
\newcommand{\cZ}{\mathcal{Z}}

\newcommand{\sfs}{{\sf s}}
\newcommand{\sh}{{\sf h}}
\newcommand{\sq}{{\sf q}}
\newcommand{\bq}{\mathsf{q}}

\newcommand{\sH}{{\sf H}}

\newcommand{\nC}{\mathbf{C}}
\newcommand{\nN}{\mathbf{N}}
\newcommand{\nR}{\mathbf{R}}
\newcommand{\nZ}{\mathbf{Z}}

\newcommand{\bh}{\mathbf{h}}


\newcommand{\fI}{\mathfrak{I}}
\newcommand{\fJ}{\mathfrak{J}}

\newcommand{\id}{\textsf{id}}


\newcommand{\tA}{\tilde{A}}
\newcommand{\tC}{\tilde{C}}
\newcommand{\tG}{\tilde{G}}


\newcommand{\lra}{\longrightarrow}
\newcommand{\eq}{\Longleftrightarrow}
\newcommand{\lmt}{\longmapsto}


\newcommand{\ben}{\begin{enumerate}}
\newcommand{\een}{\end{enumerate}}

\newcommand{\bit}{\begin{itemize}}
\newcommand{\eit}{\end{itemize}}

\newcommand{\barr}{\begin{array}{cccccccccc}}
\newcommand{\ear}{\end{array}}

\newcommand{\tbu}{{\tiny $\bullet$}}

\newenvironment{maliste}%
{ \begin{list}%
	{\tbu}%
	{\setlength{\labelwidth}{30pt}%
	 \setlength{\leftmargin}{20pt}%
	 \setlength{\itemsep}{.05cm}}}%
{ \end{list} }

\newcommand{\bem}{\begin{maliste}}
\newcommand{\eem}{\end{maliste}}

\newcommand{\ov}{\overline}
\newcommand{\un}{\underline}

\newcommand{\sg}{\langle}
\newcommand{\sd}{\rangle}
\newcommand{\scal}[2]{\sg #1,#2 \sd} 

\newcommand{\alc}{\text{Alc}}

\newcommand{\quand}{\quad\text{and}\quad}

\newcommand{\ta}[3]{t^{#1}_\tau[{\scriptstyle{ #2, #3}}]} 
\newcommand{\tam}[3]{t^{#1}_{-\tau}[{\scriptstyle{ #2, #3}}]} 

\newcommand{\explain}[2]{\underset{#1}{\underbrace{#2}}}

\newcommand{\wt}{\text{wt}}
\newcommand{\stab}{\text{stab}}


\newcommand{\al}{\alpha}
\newcommand{\be}{\beta}
\newcommand{\de}{\delta}
\newcommand{\eps}{\epsilon}
\newcommand{\ga}{\gamma}
\newcommand{\la}{\lambda}
\newcommand{\om}{\omega}
\newcommand{\si}{\sigma}

\newcommand{\tla}{w_0t_{\la}}
\newcommand{\tlas}{t_{\la^{\hspace{-.2mm}\ast}}w_0}


\newcommand{\De}{\Delta}

\begin{document}

\parindent=0mm


\title{Admissible subsets and Littelmann paths in affine Kazhdan-Lusztig theory}

\author{J\'er\'emie Guilhot }

\maketitle

\begin{abstract} 
The center of an extended affine Hecke algebra is known to be isomorphic to the ring of symmetric functions associated to the underlying finite Weyl group $W_0$. The set of Weyl characters $\sfs_\la$ forms a basis of the center and Lusztig showed in~\cite{Lus15} that these characters act as translations on the Kazhdan-Lusztig basis element $C_{w_0}$ where $w_0$ is the longest element of $W_0$, that is we have $C_{w_0}\sfs_\la =C_{w_0t_\la}$. As a consequence, the coefficients that appear when decomposing~$C_{\tla}\sfs_\tau$ in the Kazhdan-Lusztig basis are tensor multiplicities of the Lie algebra with Weyl group $W_0$. The aim of this paper is to explain how admissible subsets and Littelmann paths, which are models to compute such multiplicities, naturally appear when working out this decomposition.
\end{abstract}
%
%
%
\section{Introduction}

Let $W_e$ be an extended affine Weyl group with underlying finite Weyl group $W_0$. Then $W_e=W_0\ltimes P$ where $P$ denotes the set of weights associated to $W_0$. Let~$\cH$ be the generic affine Hecke algebra of $W_e$  defined over~$\cA$ the ring  of Laurent polynomials with one indeterminate $\bq$ and let  $\{C_w\mid w\in W_e\}$ be the Kazhdan-Lusztig basis of $\cH$. The center of the affine Hecke algebra $\cH$ associated to $W_e$  is known to be isomorphic to the ring of symmetric functions $\cA[P]^{W_0}$. The set of Weyl characters $\{\sfs_\la\mid \la\in P^+\}$ forms a basis of $\cA[P]^{W_0}$ and we have~$C_{w_0}\sfs_\la=C_{w_0t_\la}$ where $t_\la$ denotes the translation by $\la\in P^+$ in $W_e$; see \cite{Lus15,R-N:03} and the references therein.  

\medskip

Denote by $V(\tau)$  the irreducible highest weight module of weight $\tau\in P^+$ for the simple Lie algebra over $\nC$ with Weyl group $W_0$ and weight lattice $P$. Then the character of $V(\la)$ is $\sfs_\la$ and for all $\tau,\la\in P^+$  we have~$\sfs_\la\sfs_\tau=\sum m_{\la,\tau}^\mu \sfs_\mu$ where $m_{\tau,\la}^\mu$ is the multiplicity of $V(\mu)$ in the tensor product $V(\la)\otimes V(\tau)$. Computing the multiplicities $m_{\tau,\la}^\mu$ is one of the most basic question in representation theory of simple  Lie algebras over~$\nC$. 
Littelmann showed \cite{Lit1} that such multiplicities can be determined by counting certain kind of paths  in the weight lattice $P$ constrained to stay in the fundamental chamber. 
Later on, Lenart and Postnikov \cite{LP,LP2} showed that these multiplicities can be determined using admissible subsets associated to a fix reduced expression of~$t_\tau\in W_e$. In \cite{LP} they used a geometric approach based on equivariant $K$-theory while in \cite{LP2} their approach  is more axiomatic and is based on the fact that their model satisfies a set of axioms, introduced by Stembridge in~\cite{St:02}, that encode the combinatorics of  Weyl characters. The model of Lenart and Postnikov can be viewed as a discrete counterpart of Littelmann paths model and they explicitly constructed a bijection between admissible subsets and Lakshmibai-Seshadri paths (which are certain kind of Littelmann paths).  

\medskip

In the extended affine Hecke algebra, we must have 
$$C_{w_0t_\la}\sfs_\tau=C_{w_0}\sfs_\la\sfs_{\tau}=\sum m_{\la,\tau }^\mu C_{w_0t_\mu}.$$
The aim of this paper is to explain how admissible subsets (and thus Littelmann paths) naturally appear when decomposing $C_{w_0t_\la}\sfs_\tau$ in the Kazhdan-Lusztig basis. Ultimately, this will be a consequence of a multiplication formula for two standard basis elements  in the extended affine Hecke algebra \cite[proof of Proposition 5.1]{bremke}. More precisely, we will, for all $\la,\tau\in P^+$
\ben
\item construct elements 
$\sh_{\tau}\in T_{t_\tau}+\sum_{y<t_\tau}\sq^{-1}\nZ[\sq^{-1}] T_y$
such that $C_{w_0}\sh_\tau=C_{w_0t_\tau}= C_{w_0}\sfs_\tau$;
\item show that $ C_{w_0t_\la}\sh_\tau$ is a $\nZ$-linear combination of Kazhdan-Lusztig basis elements;
\item show that determining the expansion in (2) is equivalent to finding terms of maximal degree in products of the form~$T_{ w_0t_\la v}T_{t_\tau}$ ($v\in W_0$) expressed in the standard basis;
\item show that the maximal terms in (3) are indexed by admissible subsets $J$ associated to a reduced expression of~$t_\tau$ as defined by Lenart and Postnikov.  
\een

The paper is organised as follows.  In Section 2, we introduce all the needed material on (extended) affine Weyl groups.
In Section 3, we present  Kazhdan-Lusztig theory for affine Hecke algebras with unequal parameters and we describe the center of these algebras. 
In Section 4 we prove (1)--(3) above: this will essentially be a consequence of results on the lowest two-sided cells in  \cite{guilhot7}. We will prove Statement (4) in Section 5. In Section~6, following \cite{LP2}, we study the connections between our work and the results of Lenart and Postnikov and we describe the bijection between admissible subsets and Lakshmibai-Seshadri paths.
%

\section{Affine Weyl groups}

Let $V$ be an Euclidean space  with scalar product $(\cdot,\cdot)$. We denote by 
$V^\ast$ the dual of $V$ and by $\langle\ ,\ \rangle : V \times V^* \longrightarrow \nR$  the canonical pairing. Let $\Phi$ be a root system and let $\Phi^\vee$ be the dual root system. If $\al\in \Phi$ then $\al^\vee\in \Phi^\vee$ is defined by~$\langle x,\al^{\vee} \rangle = 2 (x,\al)/(\al,\al)$. We fix a set of positive roots $\Phi^+$ and a simple system $\De=\{\al_1,\ldots,\al_N\}$ such that $\De\subset \Phi^+$.


\subsection{Geometric presentation of an affine Weyl group}
\label{geometric}
We denote by $H_{\al,n}$ the hyperplane defined by the equation $\sg x,\al^\vee \sd=n$ and by $\cF$ the collection of all such hyperplanes.
We will say that an hyperplane $H$ is of direction $\al\in\Phi^+$ if there exists a pair $(\al,n)\in \Phi^+\times \nZ$ such that $H=H_{\al,n}$. We then  write $\ov{H}=\al$. For any subset $F\subset \cF$ we set $\ov{F}:=\{\ov{H}\mid H\in F\}\subset \Phi^+$. Playing with notations yields $\ov{\{H\}}=\{\ov{H}\}$.

\medskip

 Let $W_a$ be the group generated by the set of orthogonal reflections $s_{\al,n}$ with respect to $H_{\al,n}$ where $\al\in \Phi$ and~$n\in \nZ$. The group $W_a$ is an affine Weyl group of type $\Phi^\vee$ and it is isomorphic to $W_0\ltimes Q$ where $Q$ is the lattice generated by $\Phi$. For $\la\in Q$, we will denote by $t_\la$  the translation by $\la$ in $W_a$. It is well known that~$W_a$ is generated by the set $S:=\{s_{\al_i,0}\mid \al\in \De\}\cup \{s_{\tilde{\al},1}\}$ where $\tilde{\al}^\vee$ is the highest root of $\Phi^\vee$.  
We will simply write~$s_{\al_i}$ for~$s_{\al_i,0}$ where $1\leq i\leq N$ and $s_{\al_0}$ for  $s_{\tilde{\al},1}$. 
  Let~$W_0$ be the stabiliser of $0$ in $W_a$ and  $w_0$ be the longest element of $W_0$.  Clearly $W_0=\sg S_0 \sd$ where $S_0=\{s_{\al_1},\ldots,s_{\al_N}\}$. We will denote by $\id$ the identity element in $W_a$.

\medskip

The set of alcoves, denoted $\alc(\cF)$, is the set of connected components of $V\backslash \cF$. The fundamental alcove $A_0$ is defined by
\begin{align*}
A_0&=\{x\in V\mid 0<\sg x,\al^\vee \sd<1,\ \forall \al\in \Phi^+\}\\
&=\{x\in V\mid 0<\sg x,\al^\vee \sd<1,\ \forall \al\in \De\}.
\end{align*}
The set of hyperplanes bounding $A_0$ is equal to $\{H_{\al,0}\mid \al\in \De\}\cup\{H_{\tilde{\al},1}\}$: these are called the walls of $A_0$. A face of $A_0$ (that is a codimension 1 facet) is said to be of type $s_{\al_i}$ if it is contained in the hyperplane $H_{\al_i,0}$ and of type $s_{\al_0}$
 if it is contained in the hyperplane $H_{\tilde{\al},1}$. 
 
 \medskip

The group $W_a$ acts simply transitively on the set of alcoves and we can extend the definition of walls and faces to all alcoves. Two alcoves $A$ and $A'$ are then said to be $s$-adjacent where $s\in S$ if they share a face $f$ of type $s$ and we write $A\sim_{s} A'$. In other words, $A$ and $A'$ are $s$-adjacent if there exists $w\in W_a$ such that $wf$ is the face of type $s$ of $A_0$.  
We will denote by $A_y$ the alcove $yA_0$.

\begin{Rem}
It is important to notice that the alcoves $A_w$ and $A_{ws}$ are $s$-adjacent for all $w\in W_a$ and all~$s\in S$. Therefore, any  expression $\vec{w}=s_{1}\ldots s_{n}$ ($s_i\in S$) of $w\in W_a$ defines a sequence of adjacent alcoves starting at the fundamental alcove $A_0$ and finishing at the alcove $A_w$:
$$e\sim_{s_{1}} A_{s_{1}}\sim_{s_{2}} A_{s_{1}s_{2}}\sim_{s_{3}}\ldots \sim_{s_{n}} A_{s_{1}\ldots s_{n}}=A_w.$$
Further, there exists a unique pair $(\al,k)\in \Phi^+\times \nZ$ such that the hyperplane $H_{\al,k}$ separates the alcoves $A_w$ and~$A_{ws}$ and we have $s_{\al,k}A_w=A_{ws}$. 
\end{Rem}

\medskip

Any hyperplane $H_{\al,n}$ where $\al\in \Phi^+$ divides the space $V$ into two half spaces 
$$H^+_{\al,n}=\{\la\in V\mid \scal{\la}{\al^\vee}>n\}\quand H^-_{\al,n}=\{\la\in V\mid \scal{\la}{\al^\vee}<n\}.$$
We say that an hyperplane $H\in \cF$ separates the alcoves $A$ and $B$ if and only if $A\in H^\eps$ and $B\in H^{-\eps}$  where~$\eps=\pm$. 
Given two alcoves $A,B\in \alc(\cF)$, we set 
$$H(A,B)=\{H\in \cF\mid H\text{ separates $A$ and $B$}\}.$$ 
For all $A\in\alc(\cF)$, $\al\in \Phi$ and $n\in \nZ$, we write $n<A[\al]$ (respectively $n>A[\al]$) if and only if for all $\la\in A$ we have~$n<\scal{\la}{\al^\vee}$ (respectively $n>\scal{\la}{\al^\vee}$). For two alcoves $A$ and $A'$, we write $A[\al]<A'[\al]$ if and only if  $\scal{\la}{\al^\vee}<\scal{\mu}{\al^\vee}$ for all $(\la,\mu)\in A\times A'$. For all alcoves $A$ and all roots~$\al\in \Phi$, there exists a unique $n\in \nZ$ such that $n<A[\al]<n+1$.

\medskip

The following proposition gathers some well known results about the length function and the action of $W_a$ on the set of alcoves. A standard reference for these results is \cite{Hum}.

\begin{Prop}
\label{basics}
Let $w\in W_a$. We have 
\ben
\item $\ell(w)=|H(A_0,A_w)|$
\item Let $s\in S$ and let $H_{\al,n}$ where $\al\in \Phi^+$ be the unique hyperplane separating $w$ and $ws$. We have $ws<w$ if and only  one of the following statement holds: 
\ben
\item $A_w\in H_{\al,n}^+$, $A_{ws}\in H_{\al,n}^-$ and $n>0$,
\item $A_w\in H_{\al,n}^-$, $A_{ws}\in H_{\al,n}^+$ and $n\leq 0$.
\een
\item We have $\ov{H(A_0,A_v)}=\{\al\in \Phi^+\mid v^{-1}\al\in \Phi^-\}$ for all $v\in W_0$.
\een
\end{Prop}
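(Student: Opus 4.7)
I would prove the three parts in turn by short geometric arguments, each rooted in the structure of the Coxeter complex of $W_a$.

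For (1), I would proceed by induction on $\ell(w)$. Fixing a reduced expression $w=s_1\cdots s_k$, the sequence $A_0,\,s_kA_0,\,s_{k-1}s_kA_0,\ldots,s_1\cdots s_kA_0=wA_0$ forms a gallery of length $k$, and each step crosses exactly one hyperplane of $\cF$, giving $|H(A_0,wA_0)|\leq \ell(w)$. For the reverse inequality, the classical Coxeter-complex argument (strong exchange condition) shows that if any two steps crossed the same hyperplane then the word could be shortened, contradicting reducedness; hence the $\ell(w)$ crossings occur on distinct hyperplanes.

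For (2), I would invoke (1) together with the observation that $H_{\al,n}$ is the only hyperplane separating $wA_0$ from $swA_0$. It follows that $H(A_0,wA_0)$ and $H(A_0,swA_0)$ coincide except possibly at $H_{\al,n}$, and in fact exactly one of them contains $H_{\al,n}$ (otherwise $wA_0$ and $swA_0$ would lie on the same side of $H_{\al,n}$). Combined with (1), we deduce $sw<w$ iff $H_{\al,n}\in H(A_0,wA_0)$, i.e., iff $A_0$ and $wA_0$ lie on opposite sides of $H_{\al,n}$ (equivalently $A_0$ and $swA_0$ on the same side). The defining inequalities of $A_0$ give, for $\al\in\Phi^+$ and $x\in A_0$, $\langle x,\al^\vee\rangle\in(0,1)$, so $A_0\subset H_{\al,n}^-$ when $n\geq 1$ and $A_0\subset H_{\al,n}^+$ when $n\leq 0$. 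Pairing these two regimes with the side condition on $wA_0$ and $swA_0$ yields exactly cases (a) and (b), and the converse is the same chain of equivalences read backwards.

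For (3), I would treat $\si$ as an element of $\Om_0$ — so that $\si$ is an orthogonal linear map fixing $0$ and $A_0\si=\si^{-1}(A_0)$ in the right-action convention. For $x\in A_0$ and $\ga\in\Phi^+$, one computes $\langle\si^{-1}(x),\ga^\vee\rangle=\langle x,(\si\ga)^\vee\rangle\in(-1,1)$ since $\si\ga\in\Phi$. Consequently both $A_0$ and $A_0\si$ sit inside the slab $\{|\langle\cdot,\ga^\vee\rangle|<1\}$, so among the hyperplanes of direction $\ga$ only $H_{\ga,0}$ can separate them. Since $A_0\subset H_{\ga,0}^+$, separation occurs iff $A_0\si\subset H_{\ga,0}^-$, equivalently iff $\si\ga\in\Phi^-$, which in the paper's notation reads $\ga\si^{-1}\in\Phi^-$. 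The same computation, with $A_0$ replaced by $A_0^-=\si_{\Om_0}^{-1}(A_0)\subset H_{\ga,0}^-$ and $\si$ by $\si\si_{\Om_0}$, yields the equality for $A_0^-$ since $\si_{\Om_0}$ exchanges $\Phi^+$ and $\Phi^-$.

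The main potential obstacle is the minimality statement in (1) — that no hyperplane is crossed twice in a minimal gallery — which rests on the classical Coxeter-theoretic exchange argument; once this is granted, the remaining arguments reduce to elementary sign-of-pairing computations.
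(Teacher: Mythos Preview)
The paper does not actually prove this proposition: it is introduced with ``The following proposition gathers some well known facts about the two actions introduced above'' and is stated without proof. Your proposal therefore supplies arguments the paper omits, and your approach for all three parts is the standard one and is essentially correct.

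One small imprecision worth noting in part (3): your sentence ``the same computation, with $A_0$ replaced by $A_0^-$ and $\si$ by $\si\si_{\Om_0}$'' is not quite the right bookkeeping, since the statement concerns $\overline{H(A_0^-,A_0^-\si)}$ for the \emph{same} $\si$. The clean way to finish is to repeat your slab argument verbatim: both $A_0^-$ and $A_0^-\si$ lie in $\{|\langle\cdot,\ga^\vee\rangle|<1\}$, so only $H_{\ga,0}$ can separate them; since $A_0^-\subset H_{\ga,0}^-$, separation occurs iff $A_0^-\si\subset H_{\ga,0}^+$, and your pairing computation (now with the extra $\si_{\Om_0}$ coming from $A_0^-=A_0\si_{\Om_0}$) gives that this holds iff $\si_{\Om_0}(\si(\ga))\in\Phi^+$, i.e.\ $\si(\ga)\in\Phi^-$, i.e.\ $\ga\si^{-1}\in\Phi^-$ in the paper's convention. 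This is presumably what you meant, but as written the substitution is slightly off.
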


\begin{Exa}
\label{Exa-G2-1}
Let $\Phi$ be a root system of type $G_2$ and let $\Delta=\{\al_1,\al_2\}$ be a simple system in $\Phi$ such that $\al_2$ is the short root so that
$$\Phi=\pm\{\al_1,\al_2,\al_1+\al_2,\al_1+2\al_2,\al_1+3\al_2,2\al_1+3\al_2\}.$$
Then $\Phi^\vee$ is also of type $G_2$ and the coroot of $\al_1+2\al_2$ is $2\al_1^\vee+3\al_2^\vee$ which is the highest root of $\Phi^\vee$. 
In Figure \ref{rootG2}, we represent the root system $\Phi$. In Figure \ref{action}, we represent the alcoves $A_0$, $A_{w_0}$ and the alcove $A_w$ where $w=s_{\al_0}s_{\al_1}s_{\al_2}s_{\al_1}s_{\al_2}s_{\al_0}$. 

\psset{unit=.8cm}
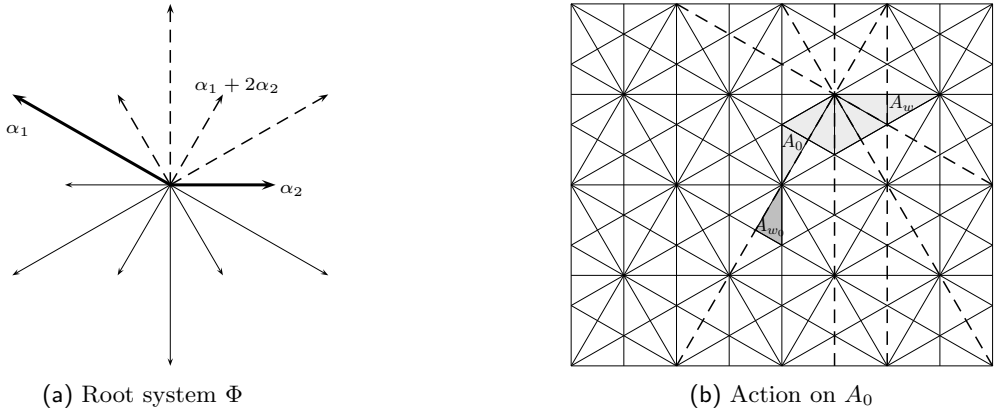
\begin{figure}[h]
\centering
\begin{subfigure}[b]{0.35\textwidth}
\centering
\begin{pspicture}(-4,-3)(4,3)
\psset{linewidth=.1mm}
\psline[linewidth=.4mm]{->}(0,0)(1.732,0)
\psline{->}(0,0)(-1.732,0)

\psline[linewidth=.2mm,linestyle=dashed]{->}(0,0)(2.598,1.5)
\psline{->}(0,0)(-2.598,-1.5)

\psline[linewidth=.4mm]{->}(0,0)(-2.598,1.5)
\psline{->}(0,0)(2.598,-1.5)

\psline[linewidth=.2mm,linestyle=dashed]{->}(0,0)(0,3)
\psline{->}(0,0)(0,-3)

\psline[linewidth=.2mm,linestyle=dashed]{->}(0,0)(.866,1.5)
\psline{->}(0,0)(.866,-1.5)
\psline[linewidth=.2mm,linestyle=dashed]{->}(0,0)(-.866,1.5)
\psline{->}(0,0)(-.866,-1.5)

\rput(2,-.1){{\scriptsize $\al_2$}}

\rput(1.1,1.65){{\scriptsize $\al_1+2\al_2$}}
\rput(-2.5,.9){{\scriptsize $\al_1$}}

\end{pspicture}\caption{Root system $\Phi$}
\label{rootG2}
\end{subfigure}
\qquad\qquad
\begin{subfigure}[b]{0.5\textwidth}
\centering
 \begin{pspicture}(-4,-3)(4,3)
                \psset{linewidth=.1mm}
\pspolygon[fillstyle=solid,fillcolor=gray!15!](0,0)(0,1)(.433,.75)

\pspolygon[fillstyle=solid,fillcolor=lightgray](0,0)(0,-1)(-.433,-.75)

\pspolygon[fillstyle=solid,fillcolor=gray!15!](1.732,1.5)(2.598,1.5)(1.732,1)

\pspolygon[fillstyle=solid,fillcolor=gray!15!](0,1)(.433,.75)(.866,1.5)
\pspolygon[fillstyle=solid,fillcolor=gray!15!](.433,.75)(.866,1.5)(.866,.5)
\pspolygon[fillstyle=solid,fillcolor=gray!15!](.866,1.5)(.866,.5)(1.3,.75)
\pspolygon[fillstyle=solid,fillcolor=gray!15!](.866,1.5)(1.3,.75)(1.732,1)
\pspolygon[fillstyle=solid,fillcolor=gray!15!](.866,1.5)(1.732,1)(1.732,1.5)

\psline(3.464,3)(-3.464,3)
\psline(3.464,-3)(-3.464,-3)
\psline(3.464,3)(3.464,-3)
\psline(-3.464,3)(-3.464,-3)

\psline(3.464,1.5)(-3.464,1.5)
\psline(3.464,0)(-3.464,0)
\psline(3.464,-1.5)(-3.464,-1.5)

\psline(2.598,3)(2.598,-3)
\psline[linewidth=.2mm,linestyle=dashed](1.732,3)(1.732,-3)
\psline[linewidth=.2mm,linestyle=dashed](.866,3)(.866,-3)
\psline(0,3)(0,-3)
\psline(-2.598,3)(-2.598,-3)
\psline(-1.732,3)(-1.732,-3)
\psline(-.866,3)(-.866,-3)

\psline(1.732,3)(3.464,2)
\psline(0,3)(3.464,1)
\psline[linewidth=.2mm,linestyle=dashed](-1.732,3)(3.464,0)
\psline(-3.464,3)(3.464,-1)
\psline(-3.464,2)(3.464,-2)
\psline(-3.464,1)(3.464,-3)
\psline(-3.464,0)(1.732,-3)
\psline(-3.464,-1)(0,-3)
\psline(-3.464,-2)(-1.732,-3)

\psline(-1.732,3)(-3.464,2)
\psline(0,3)(-3.464,1)
\psline(1.732,3)(-3.464,0)
\psline(3.464,3)(-3.464,-1)
\psline(3.464,2)(-3.464,-2)
\psline(3.464,1)(-3.464,-3)
\psline(3.464,0)(-1.732,-3)
\psline(3.464,-1)(0,-3)
\psline(3.464,-2)(1.732,-3)

\psline(-3.464,0)(-1.732,-3)
\psline(-3.464,3)(0,-3)
\psline(-1.732,3)(1.732,-3)
\psline[linewidth=.2mm,linestyle=dashed](0,3)(3.464,-3)
\psline(1.732,3)(3.464,0)

\psline(3.464,0)(1.732,-3)
\psline(3.464,3)(0,-3)
\psline[linewidth=.2mm,linestyle=dashed](1.732,3)(-1.732,-3)
\psline(0,3)(-3.464,-3)
\psline(-1.732,3)(-3.464,0)

\rput(0.1,.7){ \scalebox{.7}{$A_0$}}
\rput(-0.27,-.7){ \scalebox{.7}{$A_{w_0}$}}
\rput(1.9,1.35){ \scalebox{.7}{$A_w$}}

\end{pspicture}
\caption{Action on $A_0$}
\label{action}
\end{subfigure}
 \caption{Roots, hyperplanes and alcoves in type $G_2$}
 \label{G2}
\end{figure}
The set of hyperplanes that separates $A_0$ and $A_w$ 
is
$$H(A_0,A_w)=\left\{H_{\al_2,2},H_{\al_1+2\al_2,2},H_{\al_1+3\al_2,1},H_{\al_2,1},H_{\al_1,0},H_{\al_0,1}\right\}$$ 
and these are represented with dashed lines. Note that the cardinal of $H(A_0,wA_0)$ is indeed the length of~$w$. The alcoves of the sequence 
$$(A_0,A_{s_0},A_{s_0s_1},A_{s_0s_1s_2},A_{s_0s_1s_2s_1},A_{s_0s_1s_2s_1s_2},A_{w})$$ are colored in light gray. Finally we have
$$s_{\al_0}s_{\al_1}s_{\al_2}s_{\al_1}s_{\al_2}s_{\al_0}A_0=s_{\al_2,2}s_{\al_1+2\al_2,2} s_{\al_1+3\al_2,1}s_{\al_2,1}s_{\al_1,0}s_{\al_0,1}A_0.$$

\end{Exa}


\subsection{Weight functions and special points}
\label{Lpoints}
Let $L$ be a positive weight function on $W_a$, that is a function $L:W_a\lra \nN$ such that $L(ww')=L(w)+L(w')$ whenever $\ell(ww')=\ell(w)+\ell(w')$. To determine a weight function, it is enough to give its values on the conjugacy classes of generators of $S$. From now on, we fix such a positive weight function $L$ on $W_a$. 

\medskip

Let $H$ be an hyperplane in $\cF$. We say that $H$ is of weight $L(s)$ if it contains a face of type $s\in S$.  This is well-defined since if $H$ contains a face of type $s$ and $s'$ then $s$ and $s'$ are conjugate and $L(s)=L(s')$ see \cite[Lemma 2.1]{bremke}. We denote the weight of an hyperplane $H$ by $L(H)$.  If $\al\in \Phi$, we set $L(\al)=\max_{\ov{H}=\al} L(H)$.
 For any~$\la\in V$ we set
$$L(\la)=\sum_{H,\la\in H} L(H).$$
Let $\nu=\max_{\la\in V} L(\la)$. We call $\la$ an $L$-weight if $L(\la)=\nu$ and we denote by $P$ the set of $L$-weights. Further we denote by $P^+$ the set of dominant $L$-weights that is $P^+=\{\la\in P\mid \scal{\la}{\al^\vee}\geq 0 \text{ for all } \al\in \Phi^+\}$ and by $P^-$ the set of anti-dominant weights, that is $P^-=-P^+$. Without loss of generality, we will always assume that~$0\in V$ is an~$L$-weight.  The action of the longest element $w_0\in W_0$ on the set of weights is an involution that sends $P^+$ onto $P^-$ and  we set $\la^\ast=w_0\la$ for all~$\la\in P$.


\subsection{Extended affine Weyl groups}
The extended affine Weyl group is defined by $W_e=W_0\ltimes P$; it acts naturally on the set of alcoves $\alc(\cF)$ but the action is no longer faithful. If we denote by $\Pi$ the stabiliser of $A_0$ in $W_e$ then we have $W_e=\Pi\ltimes W_a$. Note that $\Pi$ permutes the weight that belong to the closure of $A_0$. Further the group $\Pi$ is isomorphic to $P/Q$, hence it is abelian and its action on  $W_a$ is given by an automorphism of the Dynkin diagram; see Planches I--IX in~\cite{bourbaki}. We denote $t_\la$ where $\la\in P$ the translation by $\la$ in $W_e$.

\medskip

An extended alcove is a pair $(A,\mu)$ where $A\in\alc(\cF)$ and $\mu$ is a vertex of $A$ which lies in $P$. We denote by~$\alc_e(\cF)$ the set of extended alcoves. The group $W_e$ acts naturally on $\alc_{e}(\cF)$ and the action is faithfull and transitive.
 Indeed, if $(A,\mu)\in \alc_e(\cF)$, there exists $w\in W_a$ such that  $wA=A_0$ and $w\mu=\mu'$ where $\mu'\in \ov{A_0}$. If we let $\pi\in \Pi$ be such that $\pi \mu'=0$ we obtain $\pi w(A,\mu)=(A_0,0)$ as required. 
To simplify the notation, we will simply write $A_0$ for $(A_0,0)$ and for all~$w\in W_e$ we set $A_{w}=wA_0$.

\medskip

All the notions and notations for alcoves in $\alc(\cF)$ can be extended to $\alc_e(\cF)$. We just omit the part  with the weight when needed. For instance if $A'=(A,\la)\in \alc_{e}(\cF)$, we write $A'[\al]<0$ to mean $A[\al]<0$. The length function, the weight function, the Bruhat order all naturally extend to $W_e$ by setting $\ell(aw)=\ell(w)$,~$L(aw)=L(w)$ and $aw<a'w'$ if and only if $a=a'$ and $w<w'$ where~$a,a'\in \Pi$ and $w,w'\in W_a$.


\newcommand{\quor}{\quad\text{or}\quad}

\subsection{Quarter of vertex $\la$}
\label{quarters}

The quarters of vertex $\la\in V$ are the connected components of 
$$V\backslash \bigcup_{H\in \cF,\la\in H} H.$$
Given $\la\in P$ and $v\in W_0$, we denote by  $\cC_{\la,v}$ the quarter of vertex $\la$ which contains $t_\la v$. When we consider a quarter with vertex $0$ we will omit the $0$ in the notation. The set of Weyl chambers is then $\{\cC_{w}\mid w\in W_0\}$ and the fundamental Weyl chamber is  $\cC_{\id}$. We have 
\begin{align*}
\cC_{\id}&:=\{x\in V\mid \scal{x}{\al^\vee}>0\text{ for all $\al\in \Phi^+$}\}\quand\\
\cC_{w_0}&:=\{x\in V\mid \scal{x}{\al^\vee}<0\text{ for all $\al\in \Phi^+$}\}.
\end{align*}
Let $X_0$  be the set of right coset representatives of minimal length of $W_0$  in $W_e$. Then $X_0$ is the set of $x\in W_e$ that satisfies $\ell(w_0x)=\ell(w_0)+\ell(x)$ and
$$x\in X_0\eq A_x\in \cC_{\id}\eq A_x[\al]>0 \text{ for all $\al\in \De$}.$$
 Any element $w$ of $W_e$ can be uniquely written under the form $w=vx$ where $v\in W_0$ and $x\in X_0$. For all~$x\in X_0$,~$\la\in P$ and $v\in W_0$, the alcove  $t_\la vx$ lies in~$\cC_{\la,v}$. 
  
\medskip

For $\la\in V$ and $\al\in \Phi^+$ we set $\la_{\al}=\scal{\la}{\al^\vee}$.  Let $\cC$ be a quarter of vertex $\la\in P$ and fix $\al\in \Phi^+$.  We have either 
$$\{\scal{x}{\al^\vee}\mid x\in \cC\}= ]\la_\al,+\infty[\quor\{\scal{x}{\al^\vee}\mid x\in \cC\}= ]-\infty,\la_\al[.$$
In the first case we say that $\cC$ is oriented toward $+\infty$ in the direction $\al$ and we write $\cC[\al]=+\infty$.
In the second case we say that $\cC$ is oriented toward $-\infty$ in the direction $\al$ and we write $\cC[\al]=-\infty$.  

\begin{Lem}
\label{pminfty}
Let $\la\in P$ and $v\in W_0$. 
We have for all $\al\in \Phi^+$ :
$$\cC_{\la,v}[\al]=\begin{cases}
+\infty &\mbox{ if $v^{-1}\al\in \Phi^+$,}\\
-\infty &\mbox{ if $v^{-1} \al \in \Phi^-$.}
\end{cases}$$
\end{Lem}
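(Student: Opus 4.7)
The plan is to reduce the statement to the vertex-zero case by translation, identify $\cC^+_{0,v}$ with the $v$-image of the fundamental chamber, and finally convert the condition $v^{-1}\ga\in\Phi^\pm$ into $\ga\si_v^{-1}\in\Phi^\pm$.

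First I would verify that $\cC^+_{\la,v}=\la+v\cC_0^+$, where the $v$ on the right denotes the usual linear action on $V$. The hyperplanes through $\la$ are precisely the $\la$-translates of those through $0$, so the quarters of vertex $\la$ are exactly the $\la$-translates of the quarters of vertex $0$. To pin down which one, I would use the extended left action described above: each simple reflection $s$ of $W_0$, viewed as an element of $S$, acts on $(A_\la,\la)$ by reflection across the wall of $A_\la$ of type $s$, and this wall is the $p_\la$-translate of the corresponding wall of $A_0$ through $0$, hence passes through $\la$. Therefore $s$ fixes the vertex $\la$ and rotates $A_\la$ about $\la$. Iterating along a reduced expression of $v\in W_0$ yields $vA_\la=\la+vA_0$, and since $vA_0\subset v\cC_0^+$, the identification $\cC^+_{\la,v}=\la+v\cC_0^+$ follows.

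Writing $y=\la+vz\in\cC^+_{\la,v}$ with $z\in\cC_0^+$, a direct computation gives
\[
\scal{y}{\ga^\vee}=\la_\ga+\scal{vz}{\ga^\vee}=\la_\ga+\scal{z}{(v^{-1}\ga)^\vee}.
\]
As $z$ ranges over the open chamber $\cC_0^+$, the pairing $\scal{z}{(v^{-1}\ga)^\vee}$ is positive and unbounded above when $v^{-1}\ga\in\Phi^+$, and negative and unbounded below when $v^{-1}\ga\in\Phi^-$. Thus $\cC^+_{\la,v}[\ga]=+\infty$ in the first case and $-\infty$ in the second.

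Finally, I would match $v^{-1}\ga\in\Phi^\pm$ with $\ga\si_v^{-1}\in\Phi^\pm$. This follows from the defining relation $vA_0=A_0\si_v$: the element $\si_v\in\Om_0$ is an affine transformation fixing $0$ that sends $A_0$ to $vA_0$, so it must coincide with the linear map $v$, whence $\ga\si_v^{-1}=v^{-1}\ga$ for every root $\ga$. The main delicate point is the first paragraph — one must carefully track that the left action of $v\in W_0\subset W_e$ on $(A_\la,\la)$ rotates $A_\la$ about $\la$ rather than about $0$; once this is in place, the remainder is routine.
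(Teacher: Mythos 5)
Your proof is correct and follows essentially the same route as the paper: the paper's proof likewise writes any point of $\cC^+_{\la,v}$ as $x=y_0\si_v+\la$ with $y_0\in \cC_0^+$ and concludes from $\scal{x}{\ga^\vee}=\la_\ga+\scal{y_0}{(\ga\si_v^{-1})^\vee}$ together with the definition of $\cC_0^+$. The only delicate point (harmless here, since you use it consistently) is your identification of $\si_v$ with ``the usual linear action of $v$'': the assignment $v\mapsto\si_v$ intertwines the combinatorial left action with a \emph{right} action on $V$, so the transformation sending $A_0$ to $vA_0$ is the single map $x\mapsto x\si_v$ rather than the homomorphic left action of $v$ --- which is precisely why the paper phrases the whole computation with $\si_v$ acting on the right.
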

\begin{proof}
Let $x\in \cC_{\la,v}$. There exist $x_0\in \cC_{v}$ and  $y_0\in \cC_{\id}$  such that $x=x_0+\la$ and  $x_0=vy_0$.
We have 
$$\scal{x}{\al^\vee}=\scal{x_0+\la}{\al^\vee}=\scal{vy_0}{\al^\vee}+\la_\al=\scal{y_0}{(v^{-1}\al)^\vee}+\la_\al.$$
Since $y_0\in \cC_{\id}$ we have $\scal{y_0}{(v^{-1}\al)^\vee}\geq 0$ if and only if $v^{-1}\al\in \Phi^+$ as required. 
\end{proof}

%
%
%

\section{Affine Hecke algebra with unequal parameters}
\subsection{Affine Hecke algebra}
Let $\cA=\nC[\bq,\bq^{-1}]$ where $\bq$ is an indeterminate. The Iwahori-Hecke algebra $\cH$ associated to $W_e$ is the free~$\cA$-module with basis $(T_{w})_{w\in W_e}$ and relations given by 
$$T_uT_v=T_{uv} \text{ whenever $\ell(uv)=\ell(u)+\ell(v)$}$$
and
$$(T_s-\bq^{L(s)})(T_s+\bq^{-L(s)})=0 \text{ if $s\in S$.}$$
From this relation, we easily find that for all $s\in S$ and all $w\in W$, we have 
\begin{equation*}
T_{s}T_{w}=
\begin{cases}
T_{sw} & \mbox{if } \ell(sw)>\ell(w),\\
T_{sw}+\xi_sT_{w} &\mbox{if } \ell(sw)<\ell(w)
\end{cases} \quad\text{ where $\xi_s=\bq^{L(s)}-\bq^{-L(s)}$.}
\end{equation*}
The basis $(T_{w})_{w\in W_e}$ is called the standard basis. We write $f_{x,y,z}$ for the structure constants with respect to this basis:
$$T_xT_y=\sum_{z\in W_e} f_{x,y,z} T_z.$$
The elements $f_{x,y,z}$ are polynomials in $\{\xi_s\mid s\in S\}$ with positive coefficients. The degree of $f_{x,y,z}$ will be denoted $\deg(f_{x,y,z})$ and is the highest power of $\bq$ that appears in $f_{x,y,z}$.

\subsection{Multiplication of the standard basis}
\label{notation-standard}
 In this section, we present a result of \cite{guilhot2} on a bound on the degree of the polynomials $f_{x,y,z}$. 
Recall that for two alcoves  $A,B\in \alc_e(\cF)$, we have set 
$H(A,B)=\{H\in \cF\mid H \text{ separates } A \text{  and } B\}$. 
Then for $x,y\in W_e$ we set
\begin{align*}
\sH_{x,y}=H(A_0,A_x)\cap H(A_x,A_{xy})\quand c_{x,y}(\al):=\underset{\ov{H}=\al}{\max_{H\in \sH_{x,y}}} L_H. 
\end{align*}
Then according to \cite[Theorem 2.4]{guilhot2} we have:
\begin{Th}
\label{bound}
The degrees of the polynomials $f_{x,y,z}$ are bounded by 
$\sum_{\al\in \ov{\sH_{x,y}}} c_{x,y}(\al)$.
\end{Th}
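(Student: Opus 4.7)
The proof proceeds by induction on $\ell(x)$. The base case $\ell(x)=0$ is immediate: $T_{1}T_{y}=T_{y}$, the set $H_{1,y}$ is empty, and the claimed bound reads $0\le 0$.

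For the inductive step, choose $s\in S$ with $x=sx'$ and $\ell(x)=\ell(x')+1$, so that $T_{x}T_{y}=T_{s}(T_{x'}T_{y})$. Expanding $T_{x'}T_{y}=\sum_{z}f_{x',y,z}T_{z}$ and applying the quadratic relation ($T_{s}T_{z}=T_{sz}$ when $\ell(sz)>\ell(z)$, and $T_{s}T_{z}=T_{sz}+\xi_{s}T_{z}$ otherwise), one reads off the coefficient of $T_{w}$:
\[
f_{x,y,w}=f_{x',y,sw}+[\ell(sw)<\ell(w)]\,\xi_{s}\,f_{x',y,w},
\]
where $[\cdot]$ denotes the Iverson bracket. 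Setting $B(u,y):=\sum_{\al\in\ov{H_{u,y}}}c_{u,y}(\al)$, the inductive hypothesis bounds both $\deg(f_{x',y,sw})$ and $\deg(f_{x',y,w})$ by $B(x',y)$, so it remains to show $B(x',y)\le B(x,y)$ in the first term and $L(s)+B(x',y)\le B(x,y)$ when the second term is present and nonzero.

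The geometric core of the argument is a comparison of $H_{x,y}$ with $H_{x',y}$. Let $H^{\star}$ be the unique hyperplane separating $x'yA_{0}$ from $xyA_{0}=sx'yA_{0}$; it contains a face of type $s$, so $L(H^{\star})=L(s)$. Since $H(A_{0},yA_{0})$ is independent of $x$, the symmetric difference $H_{x,y}\,\triangle\,H_{x',y}$ is contained in $\{H^{\star}\}$, and the four possible behaviours are parametrised by whether $H^{\star}$ lies in $H(A_{0},yA_{0})$ and in $H(x'yA_{0},yA_{0})$, respectively. The statement to verify is that whenever the second summand is present and nonzero, $H^{\star}\in H_{x,y}$ and moreover either $\ov{H^{\star}}$ is a new direction lying in $\ov{H_{x,y}}\setminus\ov{H_{x',y}}$, or else $c_{x,y}(\ov{H^{\star}})\ge c_{x',y}(\ov{H^{\star}})+L(s)$. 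In either case $L(s)+B(x',y)\le B(x,y)$. Proposition \ref{basics}(2) is the tool that controls, in terms of the position of $H^{\star}$, when the step from $x'yA_{0}$ to $xyA_{0}$ lengthens or shortens the relevant gallery, and hence when $\xi_{s}$ appears.

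The main obstacle is the residual case in which $H^{\star}$ leaves $H(xyA_{0},yA_{0})$ while simultaneously lying in $H(A_{0},yA_{0})$: then $B(x',y)$ can strictly exceed $B(x,y)$, so the naive inductive bound on $f_{x',y,sw}$ alone is insufficient and genuine cancellation between the two summands of the recursion must be extracted. This requires a finer use of Proposition \ref{basics}(2) to constrain the relative positions of $A_{0},\,yA_{0},\,x'yA_{0},\,xyA_{0}$ with respect to $H^{\star}$, together with an analysis of the unequal-parameter phenomenon of Remark \ref{cdeux}, where parallel hyperplanes of different weights in type $C$ are precisely what forces the bound to involve the maximum $c_{x,y}(\al)$ per direction rather than a crude sum of all hyperplane weights in $H_{x,y}$. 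This case-analytic geometric step is the delicate heart of the proof; the rest is algebraic bookkeeping.
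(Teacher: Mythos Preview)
The paper does not give its own proof of this theorem: it is quoted as \cite[Theorem 2.4]{jeju2}. So there is no in-paper argument to compare against. What follows is an assessment of the proposal on its own merits.

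Your inductive setup and the recursion
\[
f_{x,y,w}=f_{x',y,sw}+[\ell(sw)<\ell(w)]\,\xi_{s}\,f_{x',y,w}
\]
are correct, and you correctly locate the obstruction: in the case $H^{\star}\in H_{x',y}\setminus H_{x,y}$ one may have $B(x',y)>B(x,y)$, so the global inductive bound on $f_{x',y,sw}$ is too weak. However, your proposed repair---``genuine cancellation between the two summands of the recursion must be extracted''---cannot work. As the paper recalls just above the statement, the structure constants $f_{x,y,z}$ are polynomials in the $\xi_{s}$ with \emph{nonnegative} integer coefficients; both summands on the right-hand side therefore have nonnegative coefficients in the $\xi$'s, and the $\bq$-degree of the sum is exactly the maximum of the $\bq$-degrees of the summands. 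There is no cancellation to exploit, and consequently the argument as written has a real gap precisely at the step you flagged as ``the delicate heart of the proof''.

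What is actually needed is either (a) a strengthened inductive hypothesis that records, for each individual $z$, a bound finer than the uniform $B(x',y)$, or (b) the direct approach via the explicit expansion
\[
T_{x}T_{y}=\sum_{J\in\fI_{x,y}}\xi_{J}\,T_{x^{J}y}
\]
recalled later in the paper (Section~4). In approach (b) one shows geometrically that for every $J\in\fI_{x,y}$ the hyperplanes indexed by $J$ have pairwise distinct directions, all lying in $\ov{H_{x,y}}$, and that the weight picked up in each direction is at most $c_{x,y}(\al)$; summing over directions yields the bound. This is the line taken in \cite{jeju2}, and it bypasses the monotonicity issue $B(x',y)\le B(x,y)$ altogether.
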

We obtain the following corollary \cite[Proposition 5.3]{guilhot7} which will be crucial in the following section.

\begin{Cor}
\label{firstbound}
Let $x\in X^{-1}_0$, $v\in W_0$ and $y\in X_0$. We have
$$\deg(f_{xv,y,z})\leq L(w_0)-L(v).$$  
\end{Cor}


\subsection{Kazhdan-Lusztig basis}

Let $\bar\ $ be the ring involution of $\cA$ which takes $\bq$ to $\bq^{-1}$. This involution can be extended to a ring involution of $\cH$ via the formula
$$\ov{\sum_{w\in W_e} a_{w}T_{w}}=\sum_{w\in W_e} \ov{a}_{w}T_{w^{-1}}^{-1}\quad (a_{w}\in\cA).$$
We set 
$$\begin{array}{lllcclccccc}
\cA_{<0}=&\bq^{-1}\nZ[\bq^{-1}] &,& \cH_{<0}=\bigoplus_{w\in W_e} \cA_{<0}T_{w}\\[.4mm]
\cA_{\leq 0}=&\nZ[\bq^{-1}] &\text{and}& \cH_{\leq 0}=\bigoplus_{w\in W_e} \cA_{\leq 0}T_{w}.
\end{array}$$
For each $w\in W_e$ there exists a unique element $C_{w}\in\cH$ (see \cite[Theorem 5.2]{bible}) such that (1) $\overline{C}_{w}=C_{w}$ and (2)~$C_{w}\equiv T_{w} \mod \cH_{<0}$.
For any $w\in W_e$ we set 
$$C_{w}=\sum_{y\in W_e } P_{y,w} T_{y}\quad \text{where $P_{y,w}\in \cA_{< 0}$}.$$
The coefficients $P_{y,w}$ are called  the Kazhdan-Lusztig polynomials.  It is well known (\cite[\S 5.3]{bible}) that $P_{y,w}=0$ whenever $y\nleq w$ and that $P_{w,w}=1$. It follows that $(C_{w})_{w\in W_e}$ forms an $\cA$-basis of $\cH$ known as the Kazhdan-Lusztig basis. According to \cite[Theorem 6.6]{bible} we have
$$\forall w\in W_e, \forall s\in S, P_{x,y}=\bq^{-L(s)}P_{xs,y} \text{ whenever } x<xs \text{ and } ys<y.$$

\begin{Rem}
\label{firstbound2}
Using Corollary \ref{firstbound} and the definition of the Kazhdan-Lusztig basis, one can show that  the element $C_{w_0t_\la}T_y$ lies in $\cH_{\leq 0}$  for all $\la\in P^+$ and $y\in X_0$; see Proposition 5.3 in \cite{guilhot7}. Indeed we have 
\begin{align*}
C_{w_0t_\la}T_y&=\sum_{x\leq \tla} P_{x,w_0t_\la} T_x T_y\\
&=\sum_{x_0\in X_0, v\in W_0} P_{x_0^{-1}v,\tlas} T_{x_0^{-1}v} T_y\\
&=\sum_{ x_0^{-1}v\leq w_0t_\la} \bq^{L(v)-L(w_0)}P_{x_0^{-1}w_0,\tlas} T_{x_0^{-1}v} T_y
\end{align*}
The result follows since $\deg(f_{x_0^{-1}v,y,z})\leq L(w_0)-L(v)$ for all $z\in W_e$.
\end{Rem}

\subsection{The center of the affine Hecke algebra}
In this section we follow the presentation of Nelsen and Ram \cite{R-N:03} and we refer to it and the references therein for details and proofs.  We start by introducing another presentation of the affine Hecke algebra which is more convenient to describe its center $Z(\cH)$. For each $\la\in P$, we set $e^\la:=T_{t_\mu}T_{t_\nu}^{-1}$ where $\mu,\nu\in P^+$ are such that~$\mu-\nu=\la$. This can be shown to be independent of the choice of $\mu$ and $\nu$. Then $\cH$ is generated by the sets~$\{T_{s}\mid s\in S_0\}$ and~$\{e^\la\mid \la\in P\}$ and we have the relations
$$e^\la e^\mu=e^{\la+\mu}=e^\mu e^\la\quand e^\la T_{s_i}=T_s e^{s_i\la}+\xi_s \dfrac{e^\la-e^{s_i\la}}{1-e^{-\al_i}}.$$
Let $\cA[P]$ be the subalgebra of $\cH$ generated by $\{e^\la\mid \la\in P\}$. Then $W_0$ acts naturally on $\cA[P]$ via~$w\cdot e^\la=e^{w\la}$.

\begin{Th}
The sets $(e^\la T_w)_{\la\in P, w\in W_0}$ and $(T_w e^\la)_{w\in W_0, \la\in P}$ are $\cA$-basis of $\cH$ and the center of $\cH$ is 
$$\cZ(\cH)=\{f\in \cA[P]\mid  w\cdot f=f \text{ for all $w\in W_0$}\}.$$
\end{Th}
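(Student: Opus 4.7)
My plan is to establish the basis property first and then exploit the Bernstein--Lusztig commutation relation displayed just before the theorem to pin down the center.

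To start, I would verify that $e^\la$ is well defined: for dominant $\mu_1,\mu_2\in P^+$ the lengths add, $\ell(p_{\mu_1+\mu_2})=\ell(p_{\mu_1})+\ell(p_{\mu_2})$, so $T_{p_{\mu_1}}T_{p_{\mu_2}}=T_{p_{\mu_1+\mu_2}}$; this forces $T_{p_\mu}T_{p_\nu}^{-1}$ to depend only on $\mu-\nu$ and yields $e^\la e^\mu=e^{\la+\mu}$. For the basis statement I would use the semidirect decomposition $W_e=P\rtimes W_0$: every $w\in W_e$ writes uniquely as $p_\la v$ with $\la\in P$, $v\in W_0$. A triangularity computation in the standard basis shows that $e^\la T_v$ equals $T_{p_\la v}$ up to an invertible scalar, plus a linear combination of $T_y$ with $\ell(y)<\ell(p_\la v)$, which simultaneously gives the spanning and linear independence of $\{e^\la T_v\}$. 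The right-handed basis $\{T_v e^\la\}$ follows symmetrically, or via the anti-involution $T_w\mapsto T_{w^{-1}}$.

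For the inclusion $\cA[P]^{\Om_0}\subseteq Z(\cH)$, any $f\in\cA[P]$ commutes with every $e^\mu$, so it suffices to show that $f$ commutes with each generator $T_s$ where $sA_0=A_0\si_{\al_i}$. Writing $f=\sum_\la c_\la e^\la$ and applying the Bernstein--Lusztig relation term by term yields
$$f T_s \;=\; T_s\,(f\cdot\si_{\al_i})\;+\;\xi_s\cdot\frac{f - f\cdot\si_{\al_i}}{1-e^{-\al_i}}.$$
The hypothesis $f\cdot\si_{\al_i}=f$ annihilates the second term (after noting that $f-f\cdot\si_{\al_i}$ is $\si_{\al_i}$-antisymmetric and hence divisible by $1-e^{-\al_i}$ in $\cA[P]$), which gives $fT_s=T_sf$ for all $s$.

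For the reverse inclusion, let $z\in Z(\cH)$ and expand $z=\sum_{v\in W_0} T_v g_v$ with $g_v\in\cA[P]$ in the right-handed basis. Applying the Bernstein--Lusztig relation inductively along a reduced expression of $v$ produces $e^\mu T_v = T_v e^{\mu\cdot\si_v} + (\text{a combination of }T_{v'}\text{ with }\ell(v')<\ell(v))$. Writing $ze^\mu = e^\mu z$ and extracting the coefficient of $T_v$ for a $v$ of maximal length with $g_v\neq 0$ gives $e^\mu g_v = e^{\mu\cdot\si_v} g_v$ in $\cA[P]$, which forces $g_v=0$ unless $\si_v=\mathrm{id}$; iterating on $\ell(v)$ yields $z\in\cA[P]$. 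Then $zT_s=T_sz$ combined once more with the Bernstein--Lusztig relation forces $z-z\cdot\si_{\al_i}=0$ for every simple reflection, i.e.\ $z\in\cA[P]^{\Om_0}$.

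The main obstacle is the Bernstein--Lusztig commutation relation itself, on which both inclusions rest. One reduces to the case $\la=\omega_i$ a fundamental weight and verifies the identity by picking a reduced word for $p_{\omega_i}$ and repeatedly applying the quadratic relation $(T_s-\bq^{L(s)})(T_s+\bq^{-L(s)})=0$; the unequal-parameter framework introduces no extra difficulty because the commutation only crosses one simple reflection at a time, and the required divisibility by $1-e^{-\al_i}$ follows from the standard equivariance of polynomials under the simple reflection.
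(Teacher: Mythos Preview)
Your argument is the standard Bernstein--Lusztig proof and is correct as sketched; however, note that the paper does not supply its own proof of this theorem. The statement is quoted from the literature, with the paper explicitly writing ``we follow the presentation of Nelsen and Ram \cite{Nelsen-Ram} and we refer to it and the references therein for details and proofs.'' What you have written is precisely the argument one finds in those references (and originally in Lusztig's work), so there is no discrepancy in approach---you have simply filled in what the paper chose to outsource.

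One minor caution: in the last paragraph you say the unequal-parameter framework ``introduces no extra difficulty'' in the Bernstein--Lusztig relation. This is true in the form the paper states the relation, but be aware that in the most general unequal-parameter setting (e.g.\ type $\tilde C_n$ with distinct weights on the two end nodes, cf.\ Remark~\ref{cdeux}) the commutation formula acquires an extra correction term and the relation as displayed in the paper is already a mild simplification. Your proof is entirely consistent with the relation as the paper states it, so this does not affect the correctness of your argument relative to the present text.
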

We define the Weyl characters $\sfs_\la$ as follows
$$a_\la:=\sum_{w\in W_0}(-1)^{\ell(w)}e^{w\la}\quand \sfs_{\la}=\dfrac{a_{\la+\rho}}{a_{\rho}} \quad \text{ where } \rho=\dfrac{1}{2}\sum_{\al\in \Phi^+} \al.$$
 Then $(\sfs_{\la})_{\la\in P^+}$ form a basis of $\cZ(\cH)$. According to  \cite[Theorem 2.9]{R-N:03}, the element~$\sfs_{\la}$ acts as translation on the Kazhdan-Lusztig element $C_{w_0}$. 
 \begin{Th}
We have $\sfs_\la C_{w_0}=C_{w_0}\sfs_\la=C_{w_0t_\la}$ for all $\la\in P^+$ and 
$$C_{w_0t_\la}\sfs_{\tau}=C_{w_0}\sfs_{\la}\sfs_{\tau}=\sum_{\mu} m^\mu_{\la,\tau}C_{w_0t_\mu}.$$
\end{Th} 

%

\section{Decomposition into the Kazhdan-Lusztig basis}
The aim of this section is to show that in order to determine the decomposition of  $C_{w_0t_\la}\sfs_{\tau}$ in the Kazhdan-Lusztig basis, it is enough to determine the terms of maximal degree in the products $T_{w_0t_{\la}v}T_{t_\tau}$ where $v\in W_0$. This will be done in 3 steps:
\ben
\item we construct some special elements $\sh_\tau$ such that  $C_{w_0t_\la}\sfs_{\tau}=C_{w_0t_\la}\sh_{\tau}$ and
${\sh}_{\tau}\in T_{t_\tau}+\underset{y<t_\tau,y\in X_0}{\bigoplus}\cA_{<0}T_y,$ 
\item we show that
$C_{w_0t_\la}\sh_{\tau}\equiv \sum a_x T_x \mod \cH_{<0}$ where $a_x\in \nZ$,
\item we show that  $\ov{C_{w_0t_\la}\sh_{\tau}}=C_{w_0t_\la}\sh_{\tau}$  and therefore $C_{w_0t_{\la}}\sh_\tau =\sum a_x C_x$.
\een

Let  $x\in X_0$ and set $\sh_x:=\sum_{x'\in X_0} P_{w_0x',w_0x} T_{x'}$. Following \cite[Lemma 2.5]{Xie:17} we get  
\begin{align*}
C_{w_0x}&=\sum_{y\in W_e} P_{y,w_0x} T_{y}\\
&=\sum_{x'\in X_0,u\in W_0} P_{ux',w_0x} T_{ux'}\\
&=\sum_{x'\in X_0,u\in W_0} \bq^{L(u)-L(w_0)}P_{w_0x',w_0x} T_{u}T_{x'}\\
&=\left( \sum_{u\in W_0} \bq^{L(u)-L(w_0)}T_{u}\right)\sum_{x'\in X_0} P_{w_0x',w_0x}T_{x'} \\
&=C_{w_0}{\sh}_x.
\end{align*}
When $\tau\in P^+$, we know that $t_\tau\in X_0$ and we will write ${\sh}_\tau$ instead of ${\sh}_{t_\tau}$. For all $\tau,\la\in P^+$ we have 
$$ C_{w_0}\sfs_\tau=C_{w_0}\sh_\tau\quand C_{w_0t_\la} \sfs_{\tau}=\sfs_{\la}C_{w_0}\sfs_\tau=\sfs_{\la}C_{w_0}\sh_\tau=C_{w_0t_\la}
\sh_\tau.$$ 
This concludes part~(1) since the elements ${\sh}_\tau$ have the required form.  Similarly, we can construct elements  ${\mathsf g}_{x}$ for all $x\in X^{-1}_0$ such that ${\mathsf g}_xC_{w_0}=C_{xw_0}$. Setting ${\sf g_{t_\tau}}={\sf g}_\tau$ ($\tau\in P^-$) we  have $C_{w_0}\sh_{\tau}=C_{w_0t_{\tau}}=C_{t_{\tau^\ast}w_0}={\sf g}_{\tau^\ast}C_{w_0}$.

\medskip

 Let $\tau,\la \in P^+$. Using Remark \ref{firstbound2} and the fact that $P_{w_0x, w_0t_\tau}\in \cA_{<0}$ whenever $x<t_\tau$, we get
\begin{align*}
C_{w_0 t_\la}\sh_{\tau}&=C_{w_0 t_\la}T_{t_\tau}+\sum_{x<t_\tau,x\in X_0} \underset{\in \cH_{<0}}{\underbrace{P_{w_0x, w_0t_\tau}  C_{w_0 t_\la}T_x}}\equiv C_{w_0 t_\la}T_{t_\tau} \mod \cH_{<0}
\end{align*}
and
\begin{align*}
C_{w_0 t_\la}T_{t_\tau}&=T_{w_0 t_\la}T_{t_\tau}+\sum_{y<w_0 t_\la} P_{y,w_0 t_\la} T_yT_{t_\tau} .
\end{align*}
Let $y\in W_e$ and $(y_r,y_0)\in X^{-1}_0\times W_0$ be such $y=y_ry_0$. On the one hand $P_{y,w_0 t_\la}=\bq^{L(y_0)-L(w_0)}P_{y_rw_0,\tlas}$ and on  the other hand, by Corollary \ref{firstbound},  the maximal degree that can appear in $T_{y_ry_0}T_{t_\tau}$ is $L(w_0)-L(y_0)$. Therefore if $y_rw_0<\tlas$ we get that $P_{y,w_0t_\la} T_{y}T_{t_\tau}\in \cH_{<0}$ and
\begin{align*}
C_{w_0 t_\la}T_{t_\tau}&\equiv \sum_{y_0\in W_0}  \bq^{L(y_0)-L(w_0)} T_{t^\ast_{\la}y_0}T_{t_\tau}\equiv \sum_{v\in W_0}  \bq^{-L(v)}T_{t^\ast_{\la}w_0v}T_{t_\tau} \mod \cH_{<0}.
\end{align*}
Finally
\begin{align}
\label{firstapprox}
C_{w_0t_\la}{\sh}_\tau &\equiv \sum_{v\in W_0}   \underset{\in \cH_{\leq 0}}{\underbrace{\bq^{-L(v)} T_{w_0t_{\la}v}T_{t_\tau}}}\mod \cH_{<0}
\end{align}
as claimed in statement (2). 

\medskip

We now prove Statement (3). 
\begin{Lem}
For all $\la,\tau\in P^+$, the elements  $C_{w_0t_\la}{\sh}_\tau$ are stable under the $\bar{\ }$-involution.
\end{Lem}
\begin{proof}
First we have $C_{w_0t_\la}{\sh}_\tau=C_{w_0}\sh_\la\sh_\tau={\sf g}_{\la^\ast}{\sf g}_{\tau^\ast}C_{w_0}$ so that $C_{w_0t_\la}{\sh}_\tau$ lies in the intersection of $\cH C_{w_0}$ and $C_{w_0}\cH$. Next 
$$\cH C_{w_0}=\sg C_{xw_0}\mid x\in X_0^{-1}\sd_\cA\quand \cH C_{w_0}=\sg C_{w_0x}\mid x\in X_0\sd_\cA$$
from where we get   
$$\cH C_{w_0}\cap C_{w_0}\cH=\sg C_{w_0 t_\nu}\mid \nu\in P^+ \sd_{\cA}$$
 since $X^{-1}_0w_0\cap w_0X_0=\{w_0t_{\nu}\mid \nu\in P^+\}$. Therefore there exist $b_\nu\in \cA$ such that $C_{w_0t_\la}{\sh}_\tau=\sum_{\nu\in P^+} b_\nu C_{w_0t_\nu}$. At this stage,  in order to show that $\ov{C_{w_0t_\la}{\sh}_\tau}=C_{w_0t_\la}{\sh}_\tau$ it is now enough to show that $b_\nu\in \nZ$ since $\ov{ C_{w_0t_\nu}}= C_{w_0t_\nu}$. The following argument is inspired by \cite[Proof of Theorem 6.2]{Xie:15}.  We have
\begin{align*}
h_{w_0,w_0,w_0}C_{w_0t_\la}\sh_{\tau}&= C_{t_{\la^\ast}w_0}C_{w_0}\sh_{\tau}\\
&=  C_{t_{\la^\ast}w_0}C_{w_0t_\tau}\\
&= \sum_{z\in W_e} h_{t_{\la^\ast}w_0,w_0t_\tau,z}C_z\\
&= \sum_{\nu\in P^+} b_{\nu}  h_{w_0,w_0,w_0}C_{w_0t_\nu}
\end{align*}
where $h_{x,y,z}\in \cA$ denote the structure constants with respect to the Kazhdan-Lusztig basis. 
According to \cite[\S 13.4]{bible}, the degree of $h_{x,y,z}$ is bounded by $L(w_0)$ for all $x,y,z\in W_e$. Therefore  $\deg(h_{\tau^{-1}w_0,w_0t_\tau,z})\leq L(w_0)$ and since $\deg(h_{w_0,w_0,w_0})=L(w_0)$, this forces~$b_{\nu}\in \nZ$ as required. 
\end{proof}
Statement (3) now follows using the following lemma. 
\begin{Lem}
\label{mod-H0} Let $h\in \cH$ be such that $\bar{h}=h$ and $h\equiv \sum a_xT_x \mod \cH_{<0}$ where $a_x\in \nZ$. Then $h=\sum a_x C_x$.
\end{Lem}
\begin{proof}
We know \cite[\S 5.2.(e)]{bible} that if $h'\in\cH_{<0}$ satisfies $\bar{h'}=h$ then $h'=0$. The lemma is an easy consequence of this result setting $h'=h-\sum a_x C_x$.   
\end{proof}
As a consequence, in order to determine the decomposition of $C_{w_0t_\la}{\sh}_\tau$ in the Kazhdan-Lusztig basis, we need to determine which product $\bq^{-L(v)} T_{w_0t_\la v}T_{t_\tau}$ can actually give rise to a non zero term modulo $\cH_{<0}$. In other words, we need to determine which terms in the decomposition of $T_{w_0t_\la v}T_{t_\tau}$ in the standard basis has a coefficient of (maximal) degree $L(v)$.

\medskip

The remainder of this section is devoted to set up the notation in order to study the product  $T_{\tla v}T_{t_\tau}$.
Let~$x,y\in W_e$ and let  $\vec{y}=s_1\ldots s_n a$ be a reduced expression of~$y$ where $a\in \Pi$ and $s_i\in S$ for all $i$. Let~$J=\{i_1,\ldots,i_p\}$ be a subset of $\{1,\ldots,n\}$.
For all $1\leq \ell,k< n$, we set 
$$\vec{y}^J=\left(\prod_{r=1, r\notin J}^n s_{r}\right) a, \quad \vec{y}^J{\scriptstyle [\ell,k]}=\prod_{r=\ell, r\notin J}^k s_{r}\quand \vec{y}^J{\scriptstyle [\ell,n]}=\left(\prod_{r=\ell, r\notin J}^n s_{r}\right) a.$$
When $J$ is empty we will simply write $\vec{y}{\scriptstyle [\ell,k]}$ instead of $\vec{y}^{\hspace{.1mm}\varnothing}{\scriptstyle [\ell,k]}$ for the product $s_\ell\ldots s_{k}$ and $\vec{y}{\scriptstyle [\ell,n]}$ instead of $\vec{y}^{\hspace{.08mm}\varnothing}{\scriptstyle [\ell,n]}$. Finally we denote by $p_J(x;\vec{y})$ the sequence of alcoves $(A_{x\vec{y}^J{\scriptstyle [1,k]}})_{1\leq k\leq n}$. We see that
\bem
\item there can be repetitions in this sequence (see below);
\item any two consecutive alcoves are either equal or adjacent.
\eem
We can therefore represent $p_J(x;\vec{y})$ by a path going through the sequence of alcoves $(A_{x\vec{y}^J\scriptstyle [1,k]})_{1\leq k\leq n}$ and which folds on the $s_i$-face of $A_{x\vec{y}^J{\scriptstyle [1,i-1]}}$ for all $i\in J$ (and hence goes twice through the alcove $A_{x\vec{y}^J{\scriptstyle [1,i-1]}}=A_{x\vec{y}^J{\scriptstyle [1,i]}}$). We will say that the path $p_J(x;\vec{y})$ is included in a certain subset of $V$ if all the alcoves that appear in $p_J(x;\vec{y})$ lie in this subset. 

\medskip

Let $\fI_{x,\vec{y}}$ the set of all subsets $\{i_1,\ldots,i_p\}$ of $\{1,\ldots,n\}$ such that $1\leq i_{1}<\ldots<i_{p}\leq n$ and
$$x\vec{y}^J{\scriptstyle [1,i_\ell-1]}s_{i_\ell}<x\vec{y}^J{\scriptstyle [1,i_\ell-1]} \text{ for all $\ell\in \{1,\ldots,p\}$}.$$
For $J=\{i_{1},\ldots,i_{p}\}$ in $\fI_{x,\vec{y}}$, we set $\xi_J=\prod^{p}_{k=1} \xi_{s_{i_k}}$ so that we have \cite[Proof of Proposition 5.1]{bremke}
$$T_{x}T_{y}=\sum_{J\in\fI_{x,\vec{y}}}\xi_J T_{x\vec{y}^J}.$$
Let $H_{\al,n}$ ($\al\in \Phi^+$) be the hyperplane that separates $A_{x\vec{y}^J{\scriptstyle [1,{i_\ell}-1]}}$ and $A_{x\vec{y}^J{\scriptstyle [1,{i_\ell}-1]}s_{i_{i_\ell}}}$. By definition of $\fJ_{x,\vec{y}}$ we have $x\vec{y}^J{\scriptstyle [1,{i_\ell}-1]} s_{i_\ell}<x\vec{y}^J{\scriptstyle [1,{i_\ell}-1]}$ and therefore 
\bem
\item $A_{x\vec{y}^J\scriptstyle [1,{i_\ell}-1]}\in H^-_{\al,n}$ and $A_{x\vec{y}^J\scriptstyle [1,{i_\ell}-1]}s_{{i_\ell}}\in H^+_{\al,n}$ if $n\leq 0$;
\item $A_{x\vec{y}^J\scriptstyle [1,{i_\ell}-1]}\in H^+_{\al,n}$ and $A_{x\vec{y}^J\scriptstyle [1,{i_\ell}-1]}s_{{i_\ell}}\in H^-_{\al,n}$ if $n>0$.
\eem
We fix a reduced expression $\vec{t}_\tau$ of $t_\tau$ and we set 
$$\fI_{\la,v,\tau}=\fI_{t^\ast_{\la}w_0v,\vec{t}_\tau}\quand\fI^{\max}_{\la,v,\tau}=\{J\in \fI_{\la,v,\tau}\mid \deg(\xi_J)=L(v)\}$$  so that according to (\ref{firstapprox}) and the fact that the leading term of $\xi_J$ is $\sq^{L(v)}$ we have
\begin{align*}
 C_{ w_0t_\la} \sh_{\tau} &\equiv T_{w_0t_{\la+\tau}}+\sum_{v\in W_0\backslash\{\id\}}  \bq^{-L(v)}  T_{w_0t_\la v}T_{t_\tau}\equiv \sum_{v\in W_0}\sum_{J\in \fI^{\max}_{\la,v,\tau}}  T_{w_0t_\la vt_\tau^J}\mod \cH_{<0}.
\end{align*}
and 
 \begin{align*}
 C_{w_0t_\la}{\sh}_\tau =\sum_{v\in W_0}\sum_{J\in \fI^{\max}_{\la,v,\tau}} C_{w_0t_\la vt_\tau^J}.
\end{align*}
%

\section{Description of $\fI^{\max}_{\la,v,\tau}$ in terms of admissible subsets}
\label{section5}

Once and for all in this section, we fix $\tau\in P^+$ and a reduced expression $\vec{t}_\tau=s_1\ldots s_n a$ where $a\in \Pi$ and $s_i\in S$. Let  $(\be_1,\ldots,\be_n)\in\left(\Phi^+\right)^n$ and $(N_1,\ldots,N_k)\in \nN^n$ be such that the unique hyperplane separating 
$A_{s_1\ldots s_{k-1}}$ and $A_{s_1\ldots s_k}$ is $H_{\be_k,N_k}$. Following \cite{LP2}, we now introduce the concept of admissible subsets.

\begin{Def}
\label{admissible}
A subset $J=\{i_1,\ldots,i_p\}$ of $\{1,\ldots,n\}$ will be called an admissible subset if 
$$\id < s_{\be_{i_p}}< s_{\be_{i_p}}s_{\be_{i_{p-1}}}<\ldots< s_{\be_{i_p}}s_{\be_{i_{p-1}}}\ldots  s_{\be_{i_1}}$$
 is a saturated chain in the Bruhat order on $W_0$. We set $v_J:=  s_{\be_{i_p}}s_{\be_{i_{p-1}}}\ldots  s_{\be_{i_1}}$.
\end{Def}
Saying that the chain is saturated in the Bruhat order on $W_0$ is equivalent to say that $\ell(s_{\be_{i_p}} \ldots s_{\be_{i_{k}}})=p-k+1$ for all $1\leq k\leq p$. We note that if $\{i_1,\ldots,i_p\}$ is an admissible subset then so is $\{i_\ell,\ldots,i_p\}$ for all $\ell\leq p$ and we denote this subset by $J_{\ell-1}$ so that $J_{0}=J$ and $J_p=\varnothing$. Then we have $v_{J_{\ell-1}}= s_{\be_{i_{p}}}\ldots  s_{\be_{i_\ell}}$.

\begin{Exa}
\label{admissible-subset}
Let $W$ be of type $\tG_2$ as in Example \ref{Exa-G2-1} and let $\tau=2\al_1+3\al_2\in P^+$. We fix the following reduced expression
$$\vec{t}_\tau=s_{\al_0}s_{\al_2}s_{\al_1}s_{\al_2}s_{\al_0}s_{\al_2}s_{\al_1}s_{\al_2}s_{\al_1}s_{\al_2}.$$
The sequence of roots $(\be_1,\ldots,\be_{10})$ associated to $\vec{t}_\tau$ is
{\footnotesize $$ (\al_1+2\al_2,\al_1+\al_2,2\al_1+3\al_2,\al_1+2\al_2,\al_1+\al_2,\al_1+3\al_2,\al_1+2\al_2,2\al_1+3\al_2,\al_1+\al_2,\al_1).$$}
Following \cite[Example 10.12]{LP2}, we know that there are 14 admissible subsets and we describe these sets in the table below. In the column saturated chains, we only put the extremal element and one can recover the full chain by adding to the chain all the elements above in the same column: 
for instance, the saturated chain associated to the admissible subset~$\{3,9,10\}$ is $\id<s_{\al_1} <s_{\al_1} s_{\al_1+\al_2}<s_{\al_1}s_{\al_1+\al_2} s_{2\al_1+3\al_2}$.

\medskip

\renewcommand{\arraystretch}{1.2}

{\tiny $$\begin{array}{|c|c|c|ccccc}\hline
\text{Saturated chains} & \text{Reduced expression}& \text{admissible subset}\\\hline
1 & 1&\emptyset\\\hline
 s_{\al_1}&s_{\al_1} & \{10\}\\\hline
s_{\al_1}s_{\al_1+\al_2}& s_{\al_2}s_{\al_1} &\{9,10\},\{5,10\},\{2,10\} \\\hline
 s_{\al_1}s_{\al_1+\al_2}s_{2\al_1+3\al_2}&s_{\al_1}s_{\al_2}s_{\al_1}& \{8,9,10\}, \{3,9,10\},\{3,5,10\}\\\hline
 & & \{7,8,9,10\},\{4,8,9,10\},\\
s_{\al_1}s_{\al_1+\al_2}s_{2\al_1+3\al_2}s_{\al_1+2\al_2}&s_{\al_2}s_{\al_1}s_{\al_2}s_{\al_1}& \{1,8,9,10\},\{1,3,9,10\}, \\
& &\{1,3,5,10\}\\
\hline
s_{\al_1}s_{\al_1+\al_2}s_{2\al_1+3\al_2}s_{\al_1+2\al_2}s_{\al_1+3\al_2}&s_{\al_1}s_{\al_2}s_{\al_1}s_{\al_2}s_{\al_1}&\{6,7,8,9,10\}\\\hline
\end{array}$$}

\end{Exa}
\begin{Rem}
\label{LP-admissible}
Our definition of admissible subset is slightly different than the one in \cite{LP2} where they work with reduced expressions of $t_{-\tau}$. The connection between those two definitions will be made clear in the next section.
\end{Rem}

\medskip

Recall the definition of $L(\be)$ ($\be\in\Phi^+$) and $\cC_{v}$ ($v\in W_0$) in Section \ref{Lpoints} and \ref{quarters} respectively. 
\begin{Def}
\label{domset}
Let $J=\{i_1,\ldots,i_p\}$ be an admissible subset. We say that $J$ is 
\ben
\item $\la$-dominant for $\la\in P^+$ if the $p_J(t_\la v;\vec{t}_\tau)\subset \cC_{\id}$,
\item maximal if $L(s_{i_\ell})=L(\be_{i_\ell})$ for all $1\leq \ell\leq p$.
\een
\end{Def}
Recall that we always assume that $0$ is an $L$-weight so that $L(v_{J})=\sum^p_{k=1} L(\be_{i_{k}})$ for all $J$. In particular,  if $J\in\fI^{\max}_{\la,v,\tau}$ then $J$ must be maximal in order to satisfy $\deg(\xi_J)=L(v_J)$.

\medskip

We are now ready to state the main result of this paper. Recall the notations introduced at the end of the previous section.

\begin{Th}
\label{main}
Let $\la\in P^+$ and $v\in W_0$. 
\ben
\item If $J$ is admissible, $\la$-dominant and maximal  then $J\in\fI^{\max}_{\la,v_J,\tau}$.
\item If $J\in \fI^{\max}_{\la,v,\tau}$ then $v=v_J$, $J$ is admissible, $\la$-dominant and maximal. 
\een
\end{Th}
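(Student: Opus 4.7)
The plan is to interpret the product $T_{p_\tau}T_{vp_\la^-}$ as a sum over folded alcove walks: starting at $vA_\la^-$, one follows the labels $s_1,\ldots,s_n$ of the fixed reduced expression of $p_\tau$, with a fold at each position in $J=\{i_1<\ldots<i_p\}$. Membership $J\in\fis{\tau}{v}{\la}$ says that a fold is legal at each $i_t$, namely $s_{i_t}\pau{J}{i_t-1}{1}vp_\la^-<\pau{J}{i_t-1}{1}vp_\la^-$, which by Proposition \ref{basics}(2) is a geometric configuration of the alcove $\pau{J}{i_t-1}{1}vA_\la^-$ with respect to the wall of type $s_{i_t}$. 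The degree $\deg(\xi_J)=\sum_tL(s_{i_t})$ satisfies $\deg(\xi_J)\le\sum_tL(\be_{i_t})$, with equality iff $J$ is maximal; and $L(\si_{\be_{i_1}}\cdots\si_{\be_{i_p}})\le\sum_tL(\be_{i_t})$ by the subword property in $\Om_0$, with equality iff the partial products form a saturated chain, i.e., $J$ is admissible.

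For direction (1), assume $J$ is admissible, $\la$-antidominant, and maximal. By maximality and the saturated chain, $\deg(\xi_J)=\sum_t L(\be_{i_t})=L(\si_J)=L(v_J)$, so the degree condition is met. It then remains to verify the descent condition at each $i_t\in J$ when starting from $v_JA_\la^-$. The assumed antidominance places $\pau{J}{i_t-1}{1}v_JA_\la^-$ inside $\cC^-_0$; Lemma \ref{pminfty}, applied to the quarter $\cC^-_{\la,v_J}$, pins down the sign of $\scal{x}{\al^\vee}$ for each positive root $\al$, while the offset of the wall of type $s_{i_t}$ is read off from the admissibility chain together with the translation by $\la$. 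A short case analysis matches one of the two configurations in Proposition \ref{basics}(2), yielding the descent.

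For direction (2), suppose $J\in\fims$. From the two bounds above,
\[
L(v)=\deg(\xi_J)\le\sum_tL(\be_{i_t})\quand L(\si_{\be_{i_1}}\cdots\si_{\be_{i_p}})\le\sum_tL(\be_{i_t}).
\]
The endpoint of the folded walk is $\pau{J}{n}{1}vA_\la^-$; decomposing the corresponding element of $\Om_e=\Om_0\ltimes P$ and extracting the finite part yields $\si_v=\si_{\be_{i_1}}\cdots\si_{\be_{i_p}}$, hence $v=v_J$, and in particular $L(v)=L(\si_{\be_{i_1}}\cdots\si_{\be_{i_p}})$. Squeezing then forces both inequalities to be equalities: $L(s_{i_t})=L(\be_{i_t})$ for all $t$ (maximality) and the chain is saturated (admissibility). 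Finally, $\la$-antidominance follows from the descent conditions: if at some step $k$ the alcove $\pau{J}{k}{1}v_JA_\la^-$ exits $\cC^-_0$ across some $H_{\al,0}$, the sign of $\scal{x}{\al^\vee}$ flips (Lemma \ref{pminfty} again), and a later descent condition at an $i_t\in J$ must then fail, contradicting $J\in\fis{\tau}{v_J}{\la}$.

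The main obstacle I anticipate is the identification $\si_v=\si_{\be_{i_1}}\cdots\si_{\be_{i_p}}$ in direction (2): one must track the $\Om_e$-component of $\pau{J}{n}{1}vp_\la^-$ through the semidirect product $\Om_e=\Om_0\ltimes P$, carefully accounting for the translation parts coming from $p_\tau$ and $p_\la$. A secondary difficulty, common to both directions, is the two-case analysis of Proposition \ref{basics}(2): matching the $n>0$ or $n\le 0$ configuration to the actual position of the intermediate alcove inside $\cC^-_0$ requires relating the sign of the wall offset to the admissibility chain and to the sign of $\be_{i_t}\cdot\si_{J_t}^{-1}$ noted in the discussion immediately preceding Theorem \ref{main}.
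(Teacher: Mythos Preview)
Your argument for direction (1) is essentially the paper's.

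For direction (2) there is a real gap, precisely at the step you flag as the main obstacle. You propose to obtain $\si_v=\si_{\be_{i_1}}\cdots\si_{\be_{i_p}}$ by ``extracting the finite part'' of the endpoint $\pau{J}{n}{1}vp_\la^-$. But the expansion $T_{p_\tau}T_{vp_\la^-}=\sum_J\xi_J\,T_{\pau{J}{n}{1}vp_\la^-}$ imposes no constraint on where an individual term lands. Iterating Proposition~\ref{ksimp} one finds $\pau{J}{n}{1}vA_\la^-=p_\tau\,v_p\,A_{\la_p}^-$ with $\si_{v_p}=\si_{\be_{i_p}}\cdots\si_{\be_{i_1}}\si_v$; the $\Om_0$-part you extract is $v_p$, and nothing so far forces $v_p=e$. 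Your squeezing argument therefore has no starting point. Your antidominance argument is also incomplete: the walk can leave $\cC_0^-$ at a non-fold step and re-enter before the next $i_t\in J$, so no descent condition is directly violated.

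The paper's route is genuinely different and hinges on a tool you do not invoke: the hyperplane-direction degree bound of Theorem~\ref{bound}. One first shows $\ov{H_{p_\tau,vp_\la^-}}\subset\{\de\in\Phi^+\mid\de\si_v^{\ast-1}\in\Phi^-\}$, which already caps $\deg(\xi_J)$ at $L(v)$ and forces $vA_\la^-\in\cC_0^-$. Antidominance is then proved \emph{first} (Proposition~\ref{ladominant}): if the walk exits $\cC_0^-$ at some minimal $k_0$, Lemma~\ref{condition-incone} removes a direction $\de_0$ from the relevant $\ov{H_{\cdot,\cdot}}$, and Theorem~\ref{bound} makes the remaining degree strictly too small, a contradiction. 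Only once antidominance is in hand does Lemma~\ref{stayincone}, applied at every fold, yield the strictly decreasing chain $\si_{v_0}>\si_{v_1}>\cdots>\si_{v_p}$; matching this against $\sum_kL(s_{i_k})=L(v)$ then forces $v_p=e$ (hence $v=v_J$), together with admissibility and maximality.
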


The rest of this section is devoted to the proof of this theorem.

\begin{Prop}
\label{ksimp}
Let $v\in W_0$, $\la\in P$ and fix $k\in \{1,\ldots,n\}$. 
\ben
\item The hyperplane separating the alcoves $t_{\la}v \ta{}{1}{k-1}A_0$ and $t_{\la}v \ta{}{1}{k}A_0$ is $H_k:=H_{v\be_{k},N_{k}+\scal{\la}{{ v\be_{k}}^\vee}}$.
\item We have $t_\la   v \ta{}{1}{k-1}A_0=t_{\la'}   v' \ta{}{1}{k}A_0\text{ where } v'=vs_{\be_k} \text{ and } \la'=s_{H_k}\la$ where $s_{H_k}$ denotes the affine reflection with respect to $H_k$.
\een
\end{Prop}
\begin{proof}
The hyperplane separating $v\ta{}{1}{k-1}A_0$ and $v\ta{}{1}{k}A_0$ is $vH_{\be_{k},N_{k}}=H_{v\be_{k},N_{k}}$.
Hence the hyperplane $H_k=H_{v\be_{k},N_{k}+\scal{\la}{v\be_{k}^\vee}}$ separates the two alcoves $t_{\la}v \ta{}{1}{k-1}A_0$ and $t_{\la}v \ta{}{1}{k}A_0$.
We have
\begin{align*}
t_\la  v \ta{}{1}{k-1}A_0&=t_\la  v \ta{}{1}{k}s_kA_0\\
&=s_{H_k}t_\la  v \ta{}{1}{k}A_0\\
&=t_{s_{H_k}\la} s_{v\be_{k}} v \ta{}{1}{k}A_0\\
&=t_{s_{H_k}\la} vs_{\be_k}  \ta{}{1}{k}A_0.
\end{align*}
\end{proof}

Let $(\la,v)\in P\times W_0$ and $J=\{i_1,\ldots,i_p\}\subset \{1,\ldots,n\}$.  For all $0\leq \ell\leq p$ we define  the elements $v_\ell, \la_\ell$ and $J_\ell$ by 
\bem
\item $v_0=v$ and $v_\ell=v_{\ell-1}s_{\be_{i_{\ell}}}$;
\item $\la_0=\la$ and $\la_\ell=s_{H_\ell}\la_{\ell-1}\text{ where } H_\ell:=H_{v_{\ell-1}\be_{i_\ell},N_{i_\ell}+\scal{\la}{{ v_{i_\ell-1}\be_{i_\ell}}^\vee}}$;
\item $J_\ell:=\{i_{\ell+1},\ldots,i_p\}$.
\eem
Note that the path $p_{J_\ell}(t_{\la_\ell}v_\ell;\vec{t}_\tau)$ folds exactly $p-\ell$ times and the first fold occurs at position $i_{\ell+1}$ on the hyperplane~$H_{\ell+1}$. By a straightforward induction using Proposition \ref{ksimp}, we see that we have  for all $1\leq \ell\leq p$
$$t_{\la}v\ta{J}{1}{i_\ell-1}A_{0}=t_{\la_\ell}v s_{\be_{i_1}}\ldots s_{\be_{i_\ell}} \ta{}{1}{i_\ell}A_0=t_{\la_\ell} v_{\ell} \ta{}{1}{i_\ell}A_0.$$ 
In the case where $J$ is an admissible subset and $v=v_J$ we have $t_{\la}v\ta{J}{1}{i_\ell-1}A_{0}=t_{\la_\ell} v_{J_\ell} \ta{}{1}{i_\ell}A_0$. Further since~$J_p=\varnothing$, the path $p_{J_p}(t_\la v;\vec{t}_\tau)$ does not fold.

\begin{Exa}
\label{sequences-G2}
Let $W$ be of type $\tG_2$ as in Example \ref{Exa-G2-1} and \ref{admissible-subset}. Let $\tau=2\al_1+3\al_2\in P^+$ and fix the reduced expression $t_\tau=s_{\al_0}s_{\al_2}s_{\al_1}s_{\al_2}s_{\al_0}s_{\al_1}s_{\al_2}s_{\al_1}s_{\al_2}s_{\al_1}$.
Let $J:=\{3,5,10\}$, $\la\in P$ and $v=s_{\al_1}s_{\al_2}s_{\al_1}\in W_0$.  In Figure \ref{firstpaths-1} we describe the paths $p_{J_\ell}(t_{\la_\ell} v_\ell; \vec{t}_\tau)$ for $0\leq \ell\leq 3$  and the sequence $(\la_0,\ldots,\la_3)$ obtained  in the procedure above. 
 We write $\textcircled{\scalebox{.74}{$\ell$}}$ for the alcove $A_{t_{\la_\ell}v_\ell}$ (so that the path  $p_{J_\ell}(t_{\la_\ell}v_\ell ; \vec{t}_\tau)$ starts at the alcove  $\textcircled{\scalebox{.74}{$\ell$}}$) and the light gray alcoves represent~$A_{t_{\la_\ell}}$. 

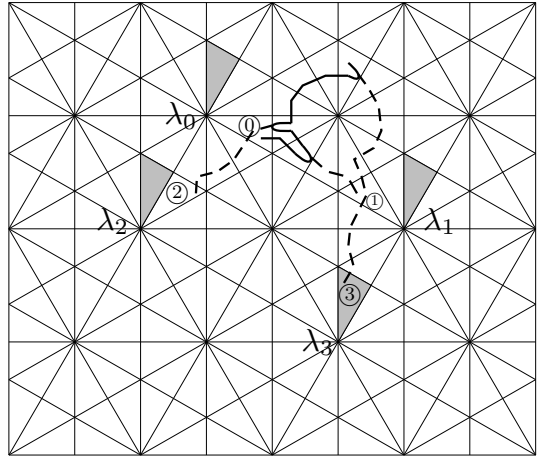
\begin{figure}[h!]
\begin{center}
\begin{pspicture}(-4.5,-3)(4.5,3)
\psset{linewidth=0.05mm}
\psline(3.464,3)(-3.464,3)
\psline(3.464,-3)(-3.464,-3)
\psline(3.464,3)(3.464,-3)
\psline(-3.464,3)(-3.464,-3)

\psline(3.464,1.5)(-3.464,1.5)
\psline(3.464,0)(-3.464,0)
\psline(3.464,-1.5)(-3.464,-1.5)

\psline(2.598,3)(2.598,-3)
\psline(1.732,3)(1.732,-3)
\psline(.866,3)(.866,-3)
\psline(0,3)(0,-3)
\psline(-2.598,3)(-2.598,-3)
\psline(-1.732,3)(-1.732,-3)
\psline(-.866,3)(-.866,-3)

\psline(1.732,3)(3.464,2)
\psline(0,3)(3.464,1)
\psline(-1.732,3)(3.464,0)
\psline(-3.464,3)(3.464,-1)
\psline(-3.464,2)(3.464,-2)
\psline(-3.464,1)(3.464,-3)
\psline(-3.464,0)(1.732,-3)
\psline(-3.464,-1)(0,-3)
\psline(-3.464,-2)(-1.732,-3)

\psline(-1.732,3)(-3.464,2)
\psline(0,3)(-3.464,1)
\psline(1.732,3)(-3.464,0)
\psline(3.464,3)(-3.464,-1)
\psline(3.464,2)(-3.464,-2)
\psline(3.464,1)(-3.464,-3)
\psline(3.464,0)(-1.732,-3)
\psline(3.464,-1)(0,-3)
\psline(3.464,-2)(1.732,-3)

\psline(-3.464,0)(-1.732,-3)
\psline(-3.464,3)(0,-3)
\psline(-1.732,3)(1.732,-3)
\psline(0,3)(3.464,-3)
\psline(1.732,3)(3.464,0)

\psline(3.464,0)(1.732,-3)
\psline(3.464,3)(0,-3)
\psline(1.732,3)(-1.732,-3)
\psline(0,3)(-3.464,-3)
\psline(-1.732,3)(-3.464,0)

\rput(-1.2,1.5){{\scalebox{1.2}{$\la_0$}}}
\pspolygon[fillstyle=solid,fillcolor=lightgray](-.866,1.5)(-.866,2.5)(-.433,2.25)
\rput(-.3,1.34){{\scalebox{.9}{$\textcircled{\footnotesize{0}}$}}}

\rput(2.2,.1){{\scalebox{1.2}{$\la_1$}}}
\pspolygon[fillstyle=solid,fillcolor=lightgray](1.732,0)(1.732,1)(2.166,.75)
\rput(1.35,.35){{\scalebox{.7}{$\textcircled{\footnotesize{1}}$}}}

\rput(-2.1,.1){{\scalebox{1.2}{$\la_2$}}}
\pspolygon[fillstyle=solid,fillcolor=lightgray](-1.732,0)(-1.732,1)(-1.3,.75)
\rput(-1.25,.45){{\scalebox{.9}{$\textcircled{\footnotesize{2}}$}}}

\rput(.6,-1.5){{\scalebox{1.2}{$\la_3$}}}
\pspolygon[fillstyle=solid,fillcolor=lightgray](.866,-1.5)(.866,-.5)(1.3,-.75)
\rput(1.02,-.9){{\scalebox{.9}{$\textcircled{\footnotesize{3}}$}}}

\psline[linewidth=.3mm](-.15,1.2)(.1,1.2)(.3,1)
\pscurve[linewidth=.3mm](.3,1)(.5,.9)(.4,1.1)
\psline[linewidth=.3mm](.4,1.1)(.25,1.3)
\pscurve[linewidth=.3mm](.25,1.3)(0,1.35)(.25,1.4)
\psline[linewidth=.3mm](.25,1.4)(.25,1.7)(.4,1.9)(.7,2.03)(1,2.03)
\pscurve[linewidth=.3mm](1,2.03)(1.13,2)(1,2.2)

\psline[linewidth=.3mm,linestyle=dashed,linecolor=black](1.114,0.47)(0.99,0.69)(0.72,.76)(.51,.95)

\psline[linewidth=.3mm,linestyle=dashed,linecolor=black](-1.004,0.47)(-0.99,0.69)(-0.72,.76)(-.51,.91)(-.25,1.3)(0,1.37)

\psline[linewidth=.3mm,linestyle=dashed,linecolor=black](0.943,-.718)(1.071,-.5)(1,-.233)(1.027,0.022)(1.236,.442)
(1.175,0.692)(1.079,0.921)(1.336,1.073)(1.435,1.302)(1.4,1.625)(1.255,1.882)(1.13,2.05)

\end{pspicture}
\end{center}
\caption{Sequences associated to the set $J=\{3,5,10\}$.}
\label{firstpaths-1}
\end{figure}
 \end{Exa}

We are now ready to prove the first part of Theorem \ref{main}.

\begin{Prop}
Let $\la\in P^+$. If $J$ is admissible, $\la$-dominant and maximal then $J\in \fI^{\max}_{\la,v_J,t_\tau}$.
\end{Prop}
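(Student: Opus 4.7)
The plan is to split the verification into two independent parts: the degree condition $\deg(\xi_J)=L(v_J)$, and the membership $J\in\fI_{p_\tau,v_Jp_\la^-}$. The degree statement is immediate from maximality: by definition $\deg(\xi_J)=\sum_{\ell=1}^p L(s_{i_\ell})$, maximality gives $L(s_{i_\ell})=L(\be_{i_\ell})$, and the discussion preceding the theorem records $L(v_J)=\sum_\ell L(\be_{i_\ell})$; so $\deg(\xi_J)=L(v_J)$.

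For the membership, I fix $t\in\{1,\dots,p\}$ and must verify the length inequality $s_{i_t}\pau{J}{i_t-1}{1}v_Jp_\la^-<\pau{J}{i_t-1}{1}v_Jp_\la^-$, which by Proposition~\ref{basics}(2) is equivalent to a specific straddling of the adjacent alcoves $A=\pau{J}{i_t-1}{1}v_JA_\la^-$ and $A'=s_{i_t}A$ relative to the hyperplane separating them. Using the rewriting $A=\pau{}{i_t-1}{1}v_{t-1}A_{\la_{t-1}}^-$ and arguing as in Proposition~\ref{ksimp} (but in the antidominant setting, which is already embedded in the construction of the sequences $\un v,\un\ga,\un\la$), this separating hyperplane equals $H_{\ga_{i_t},n_t}$ with $n_t=N_{i_t}+\scal{\la_{t-1}}{\ga_{i_t}^\vee}$, and the admissibility chain forces $\ga_{i_t}\in\Phi^+$ as noted just before the theorem.

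Next I orient the alcoves relative to $H_{\ga_{i_t},n_t}$. Since $\ga_{i_t}\cdot{\si_{v_{t-1}}^\ast}^{-1}=\be_{i_t}^\ast\in\Phi^-$, the antidominant version of Lemma~\ref{pminfty} places the quarter $\cC^-_{\la_{t-1},v_{t-1}}$, and hence the alcove $v_{t-1}A_{\la_{t-1}}^-$ adjacent to $\la_{t-1}$, in the strip $\scal{x}{\ga_{i_t}^\vee}\in(\scal{\la_{t-1}}{\ga_{i_t}^\vee},\scal{\la_{t-1}}{\ga_{i_t}^\vee}+1)$; combined with $N_{i_t}\geq 1$ (which holds for any reduced expression of $p_\tau\in X_0$ in the dominant direction), this places $v_{t-1}A_{\la_{t-1}}^-$ strictly below $H_{\ga_{i_t},n_t}$. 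Since the $i_t-1$ hyperplanes crossed by the preceding steps are distinct from $H_{\ga_{i_t},n_t}$, the alcove $A$ remains on the same side, giving $A\subset H_{\ga_{i_t},n_t}^-$ and $A'\subset H_{\ga_{i_t},n_t}^+$.

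Finally, the antidominance of $J$ gives $A\subset\cC_0^-$, so $\scal{x}{\ga_{i_t}^\vee}<0$ on $A$; combined with $A$ occupying the strip $(n_t-1,n_t)$ in the $\ga_{i_t}^\vee$-coordinate, this forces $n_t\leq 0$, which is exactly what case (b) of Proposition~\ref{basics}(2) requires, yielding the length decrease. The main obstacle lies precisely in this last sign check: without the antidominance hypothesis on the whole trajectory $\{\pau{J}{k}{1}v_JA_\la^-\}$, there would be no way to control the sign of $n_t$, and the local step-by-step length-decrease would fail—so the global geometric antidominance of $J$ is exactly the input converting the combinatorial admissibility chain into the algebraic length inequalities that place $J$ in $\fim{\tau}{v_J}{\la}$.
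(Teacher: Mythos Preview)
Your proof is correct and follows essentially the same approach as the paper: split off the degree condition via maximality, then verify the length-drop at each $i_t$ by identifying the separating wall $H_{\ga_{i_t},n_t}$ with $\ga_{i_t}\in\Phi^+$ (admissibility), placing $A$ on its negative side, and using $\la$-antidominance to force $n_t\le 0$ so that Proposition~\ref{basics}(2)(b) applies. The one cosmetic difference is that you spell out the side-determination more explicitly---via $N_{i_t}\ge 1$ and the fact that the first $i_t-1$ walls in a reduced gallery are distinct from the $i_t$-th---whereas the paper packages this into the single observation that the quarter $\cC^-_{\la_{\ell-1},v_{J_{\ell-1}}}$ is oriented towards $+\infty$ in the direction $\ga_{i_\ell}$ and then invokes Proposition~\ref{basics}.
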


\begin{proof}
In order to prove that $J=\{i_1,\ldots,i_p\}\in J\in \fI^{\max}_{\la,v_J,t_\tau}$, since $J$ is maximal,  we need to show that 
$$w_0t_{\la} v_J\ta{J}{1}{i_\ell-1}s_{i_\ell}<w_0t_{\la} v_J\ta{J}{1}{i_\ell-1} \text{ for all $1\leq \ell\leq p$}.$$
We have
\ben
\item $w_0t_{\la} v_J\ta{J}{1}{i_\ell-1} =w_0t_{\la_{\ell-1}}v_{J_{\ell-1}} \ta{}{1}{i_\ell-1}$;
\item $v_{J_{\ell-1}}=s_{\be_{i_p}}\ldots s_{\be_{i_\ell}}$ so that $v_{J_{\ell-1}}\be_{i_\ell}\in \Phi^{-}$ and $w_0v_{J_{\ell-1}}\be_{i_\ell}\in \Phi^{+}$;
 \item the hyperplane separating 
 $$w_0t_{\la_{\ell-1}} v_{J_{\ell-1}} \ta{}{1}{i_\ell-1}A_0\quand w_0t_{\la_{\ell-1}} v_{J_{\ell-1}} \ta{}{1}{i_\ell-1}s_{i_\ell}A_0$$
  is equal to $H_{w_0v_{J_{\ell-1}}\be_{i_{\ell}},m}$ where $m<0$ since $J$ is $\la$-dominant.
\een
We have $v^{-1}_{J_{\ell-1}}w_0w_0v_{J_{\ell-1}}\be_{i_{\ell}}=\be_{i_{\ell}}\in \Phi^+$ which implies that the 
 quarter $\cC_{\la^\ast_{\ell-1},w_0v_{J_{\ell-1}}}$  is oriented toward $+\infty$ in the direction $w_0v_{J_{\ell-1}}\be_{i_{\ell}}$. It follows that 
\begin{align*}
&w_0t_{\la_{\ell-1}} v_{J_{\ell-1}} \ta{}{1}{i_\ell-1}A_0\in H_{w_0v_{J_{\ell-1}}\be_{i_{\ell}},m}^-\quand\\
&w_0t_{\la_{\ell-1}} v_{J_{\ell-1}} \ta{}{1}{i_\ell-1}s_{i_\ell}A_0\in H_{w_0v_{J_{\ell-1}}\be_{i_{\ell}},m}^+
\end{align*}
hence the result by Proposition \ref{basics} since $m<0$. \end{proof}

We now focus on the second part. We start by proving some technical lemmas. Recall the definitions of $A[\al]<n$ and $A[\al]<A'[\al]$ at the end of  Section \ref{geometric} and of $\sH_{x,y}$ for $x,y\in W_e$ in Section \ref{notation-standard}.

\begin{Lem}
Let $\la\in P^+$ and $v\in W_0$. We have $\ov{\sH_{\tlas v,t_\tau}}\subset \{\de\in \Phi^+\mid v^{-1}w_0\de\in \Phi^+\}$. \end{Lem}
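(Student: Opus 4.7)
My plan is to combine a set-theoretic rewriting of $H_{p_\tau,vp_\la^-}$ with the orientation of the quarter $\cC^-_{\la,v}$ established just after Lemma~\ref{pminfty}.

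First, using the identity $H(vA_\la^-,p_\tau vA_\la^-)=H(A_0,vA_\la^-)\triangle H(A_0,p_\tau vA_\la^-)$, I would rewrite
$$H_{p_\tau,vp_\la^-}=H(A_0,vA_\la^-)\setminus H(A_0,p_\tau vA_\la^-).$$
Consequently, a hyperplane $H=H_{\de,m}$ with $\de\in\Phi^+$ belongs to $H_{p_\tau,vp_\la^-}$ iff it separates $A_0$ from $vA_\la^-$ while $A_0$ and $p_\tau vA_\la^-$ lie on the same side of $H$. This reformulation is the only place one uses the definition of the double intersection.

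Second, since $\tau\in P^+$ gives $\tau_\de=\scal{\tau}{\de^\vee}\geq 0$, translation by $p_\tau$ does not decrease $\scal{\cdot}{\de^\vee}$; together with $A_0\subset\{x:0<\scal{x}{\de^\vee}<1\}$, the rewriting forces $vA_\la^-\subset\{\scal{\cdot}{\de^\vee}<m\}$, $p_\tau vA_\la^-\subset\{\scal{\cdot}{\de^\vee}>m\}$, and $m\leq 0$.

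Third, I would invoke the orientation of the quarter: by the formula just after Lemma~\ref{pminfty}, $\cC^-_{\la,v}[\de]=+\infty$ iff $\de\si_v^{\ast-1}\in\Phi^-$. Aiming for a contradiction, assume $\de\si_v^{\ast-1}\in\Phi^+$; then $\cC^-_{\la,v}[\de]=-\infty$, so $\scal{x}{\de^\vee}<\la_\de$ for every $x\in vA_\la^-\subset\cC^-_{\la,v}$, and analogously $\scal{x}{\de^\vee}<\la_\de+\tau_\de$ for every $x\in p_\tau vA_\la^-\subset\cC^-_{\la+\tau,v}$.

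Finally, combining these constraints with $m\leq 0$, $\la\in P^-$ (so $\la_\de\leq 0$), and the fact that the suprema on $vA_\la^-$ and on $p_\tau vA_\la^-$ are attained only at the vertices $\la$ and $\la+\tau$ respectively, I would argue that the three requirements $vA_\la^-\subset\{\scal{\cdot}{\de^\vee}<m\}$, $p_\tau vA_\la^-\subset\{\scal{\cdot}{\de^\vee}>m\}$ and $A_0\subset\{\scal{\cdot}{\de^\vee}>m\}$ cannot be met by any integer $m\leq 0$. This contradiction forces $\de\si_v^{\ast-1}\in\Phi^-$, giving the desired inclusion.

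The step I expect to be the main obstacle is this final arithmetic squeeze. In particular, the delicate boundary case where $H$ passes through the vertex $\la$ (so $m=\la_\de$) must be treated carefully using the open-alcove definition of separation, since $\la$ lies simultaneously in the closure of $vA_\la^-$, of the quarter $\cC^-_{\la,v}$, and of $H$; one very likely needs the integrality $\la_\de,\tau_\de\in\nZ$ and the fact that $\la$ is a genuine vertex of $vA_\la^-$ to rule out the critical $m$.
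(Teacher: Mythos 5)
Your first step is sound and matches the paper in substance: rewriting $H_{p_\tau,vp_\la^-}=H(A_0,vA_\la^-)\setminus H(A_0,p_\tau vA_\la^-)$ and using $\la\in P^-$ does give $m\le 0$, $vA_\la^-[\de]<m$, and that $A_0$ and $p_\tau vA_\la^-$ must both lie strictly above $H_{\de,m}$ (note that your auxiliary claim about $p_\tau$ ``not decreasing $\scal{\cdot}{\de^\vee}$'' is not needed for this, which is fortunate, because it is false). The genuine gap is in your third step, where you treat left multiplication by $p_\tau$ as the geometric translation by $\tau$ and assert $p_\tau vA_\la^-\subset\cC^-_{\la+\tau,v}$. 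Writing $vA_\la^-=A_0\si$ with $\si\in\Om_e$, the commutation of the two actions gives $p_\tau vA_\la^-=A_\tau\si=vA_\la^-+\tau\ov{\si}$, where $\ov{\si}=\si_{\Om_0}\si_v^\ast$ is the linear part of $\si$; so the displacement is $\tau^\ast\si_v^\ast$, the vertex of $p_\tau vA_\la^-$ is $\la+\tau^\ast\si_v^\ast$ (not $\la+\tau$), and the shift in the direction $\de$ equals $\scal{\tau^\ast}{(\de\si_v^{\ast-1})^\vee}$, whose sign is governed precisely by whether $\de\si_v^{\ast-1}$ is positive or negative. Moreover, even granting your false containment, your final ``arithmetic squeeze'' yields no contradiction: with $m=\la_\de\le 0$ and $\tau_\de\ge 1$, an alcove with vertex $\la+\tau$ lying just below that vertex satisfies $p_\tau vA_\la^-[\de]>m$ while $vA_\la^-[\de]<m$ and $A_0[\de]>m$ — exactly the boundary case you flag as the main obstacle — and no integrality argument excludes it; indeed, under your reading of $p_\tau$ the membership condition would involve only $\la_\de$ and $\tau_\de$ and could never force the $v$-dependent conclusion $\de\si_v^{\ast-1}\in\Phi^-$.

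The correct finish (and this is what the paper does) uses the orientation of the single quarter $\cC^-_{\la,v}$ rather than a second quarter at $\la+\tau$: if $\de\si_v^{\ast-1}\in\Phi^+$, then by Lemma \ref{pminfty} the quarter $\cC^-_{\la,v}$ is oriented towards $-\infty$ in the direction $\de$, and by the displacement formula above $\scal{\tau^\ast}{(\de\si_v^{\ast-1})^\vee}\le 0$, hence $p_\tau vA_\la^-[\de]\le vA_\la^-[\de]<m$. Thus $p_\tau vA_\la^-$ lies on the same side of $H_{\de,m}$ as $vA_\la^-$, contradicting $H_{\de,m}\in H(p_\tau vA_\la^-,vA_\la^-)$ (equivalently, contradicting that it lies on the side of $A_0$). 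This forces $\de\si_v^{\ast-1}\in\Phi^-$, which is the assertion of the lemma.
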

\begin{proof}
Let $\de\in\Phi^+$ be such that  
$$H_{\de,N}\in \sH_{\tlas v,t_\tau}=H(A_0,A_{\tlas v})\cap H(A_{\tlas v},A_{\tlas v t_\tau}).$$ First, since~$\la\in P^+$ we have $A_{\tlas v}[\de]<1$. Next since $H_{\de,N}\in H(A_0,A_{\tlas v})$, it follows that $A_{\tlas v}[\de]<N\leq 0$.
If the quarter $\cC_{\la^\ast,w_0v}$ is oriented towards $-\infty$ in the direction $\de$ then $N>A_{\tlas v}[\de]\geq  A_{\tlas v t_\tau}[\de]$ but in this case we cannot have~$H_{\de,N}\in H(A_{\tlas v},A_{\tlas vt_\tau})$. This shows that  the quarter $\cC_{\la^\ast,w_0v}$ is  oriented towards $+\infty$ in the direction $\de$ and thus $v^{-1}w_0\de\in \Phi^+$ as required. 
\end{proof}

We remark that if $\la\in P^+$ and $A_{\tlas v}\notin \cC_{w_0}$ then $A_{t_\la v}\notin \cC_{\id}$ and there exists a simple root~$\al_i\in \Phi^+$ such that~$v^{-1}\al_i\in \Phi^-$ and $-1<A_{t_\la v}[\al_i]<0$. Equivalently $-w_0\al_i\in \Phi^+$ and $0<A_{\tla v}[-w_0\al_i]<1$. This implies, considering the positive root $-w_0\al_i\in \Phi^+$, that the inclusion
$$\ov{\sH_{\tlas v,t_\tau}}\subset \{\de\in \Phi^+\mid v^{-1}w_0\de\in \Phi^+\}$$
 is strict  and $\fI^{\max}_{\la,v,\tau}$ has to be empty by Theorem \ref{bound}.

\medskip

The next lemma generalises the idea above and gives some restrictions for the set $\fI^{\max}_{\la,v,\tau}$  to be non-empty.


\begin{Lem}
\label{condition-incone}
Let $\la\in P$ and $v\in W_0$.  Let $k\in \nN$ be such that  $A_{\tlas v \ta{}{1}{k}}\in \cC_{w_0}$ and let $J=\{i_1,\ldots,i_p\}\in \fI_{\la,v,\tau}$  be such that $i_1>k$. We have
\ben
\item $\ov{\sH_{\tlas v\ta{}{1}{k},\ta{}{k+1}{n}}}\subset \{\de\in \Phi^+\mid v^{-1}w_0\de\in \Phi^+\}$;
\item if $A_{\tlas v\ta{}{1}{i_1-1}}\notin \cC_{w_0}$ then $\displaystyle \xi_J=\sum_{k=1}^p L(s_{i_k})<L(v)$.
\een
\end{Lem}

\begin{proof}
Let $\de\in\Phi^+$ be such that $H_{\de,N}\in \sH_{\tlas v\ta{}{1}{k},\ta{}{k+1}{n}}$.
Assume first that the quarter $\cC_{\la^\ast,w_0v}$ is oriented towards~$-\infty$ in the direction $\de$. Then we must have 
$$A_{\tlas v}[\de]\geq \underset{<0}{\underbrace{A_{\tlas v\ta{}{1}{k}}[\de]}}\geq A_{\tlas v\ta{}{1}{k+1}}[\de]\geq A_{\tlas vt_\tau}[\de].$$
But $H_{\de,N}\in H(A_0,A_{\tlas v\ta{}{1}{k}})$ implies that $0\geq N>A_{\tlas v\ta{}{1}{k}}[\de]$. Therefore in this case we cannot have~$H_{\de,N}\in H(A_{\tlas v\ta{}{1}{k+1}},A_{\tlas vt_\tau})$. This shows that the quarter $\cC_{\la^\ast,w_0v}$ has to be oriented towards $+\infty$ in the direction $\de$ that is $v^{-1}w_0\de\in \Phi^+$. 

\medskip

We prove (2). Since $J\in \fI_{\la,v,\tau}$, there is a term of degree $\sum_{k=1}^p L(s_{i_k})$ that appear in the product $T_{t^\ast_{\la}w_0v_J}T_{t_\tau}$. Further, for all $k$ such that $k<i_1$, there is also a term of degree $\sum_{k=1}^p L(s_{i_k})$ in product $T_{t^\ast_{\la}w_0v_J\ta{}{1}{k}}T_{\ta{}{k+1}{n}}$. Let $k_0<i_1$ be the index such that $A_{\tlas v\ta{}{1}{k_0-1}}\in \cC_{w_0}$ and $A_{\tlas v\ta{}{1}{k_0}}\notin \cC_{w_0}$ and let $\al\in \De$ be the direction of the hyperplane that separates those two alcoves. The quarter $\cC_{\la^\ast,w_0v}$ has to be oriented toward~$+\infty$ in the direction $\al$ so that $v^{-1}w_0\al\in \Phi^+$. Next 
$$\ov{\sH_{\tlas v \ta{}{1}{k_0},\ta{}{k_0+1}{n}}}=\explain{\subset \{\de\in \Phi^+\mid v^{-1}w_0\de\in \Phi^+\}}{\ov{\sH_{\tlas v \ta{}{1}{k_0-1},\ta{}{k_0}{n}}}}-\{\al\}.$$
Theorem \ref{bound} now implies that the maximal degree that can appear in  the product $T_{t^\ast_{\la}w_0v_J\ta{}{1}{k_0}}T_{\ta{}{k_0+1}{n}}$ is strictly less than $L(v)$. But there is a term of degree $\sum_{k=1}^p L(s_{i_k})$, hence the result.

\end{proof}

\begin{Lem}
\label{simp}
Let $v\in W_0$, $\la\in P$ and fix $k\in \{1,\ldots,n\}$ such that 
$$\tlas  v \ta{}{1}{k-1}s_{k}<\tlas  v \ta{}{1}{k-1}\quand A_{\tlas  v \ta{}{1}{k-1}}\in \cC_{w_0}.$$ 
Then  $w_0v\be_{k}\in \Phi^+$. In particular $w_0vs_{\be_k}>w_0v$ and $v s_{\be_k}<v$.
\end{Lem}
\begin{proof}
The hyperplane $H$ that separates $A_{\tlas  v \ta{}{1}{k-1}}$ and $A_{\tlas  v \ta{}{1}{k-1}s_k}$ is $H=H_{w_0v\be_{k},N_{k}+\scal{\la^\ast}{{w_0v\be_{k}}^\vee}}$. 
Since $A_{\tlas  v \ta{}{1}{k-1}}\in \cC_{w_0}$, we must have 
$$A_{\tlas  v \ta{}{1}{k-1}}\in H^-\quand A_{\tlas  v \ta{}{1}{k-1}s_k}\in H^+,$$ which means that the quarter $\cC_{\la^\ast,w_0v}$ has to be oriented toward $+\infty$ 
in the direction $|w_0v\be_{k}|\in \Phi^+$, where $|w_0v\be_{k}|$ denotes the unique positive root colinear to $w_0v\be_{k}$. According to Lemma \ref{pminfty}, since $w_0v^{-1}w_0v\be_{k}=\be_k\in \Phi^+$, we have~$w_0v\be_{k}\in \Phi^+$.
\end{proof}

We are now ready to prove the second statement of Theorem \ref{main}. Let $\la\in P^+$, $v\in W_0$ and assume that $J=\{i_1,\ldots,i_p\}\in \fI^{\max}_{\la,v,\tau}$. We need to show that $J$ is admissible, $\la$-dominant and that $v=v_J$.

\medskip

By maximality of $J$, we have $\sum_{k=1}^p L(s_{i_k})=L(v)$ and the second statement of Lemma \ref{condition-incone} implies
$$A_{\tlas v\ta{}{1}{i_1-1}}= A_{t_{\la_1}^\ast w_0v_1\ta{}{1}{i_1}}\in \cC_{w_0}.$$
Note also that $J_1\in  \fI_{\la_1,v_1,t_\tau}$. Applying Lemma \ref{condition-incone} to $J_1$ and $i_2>i_1$ we get
$$\ov{\sH_{t_{\la_1}^\ast w_0v_1\ta{}{1}{i_1},\ta{}{i_1+1}{n}}}\subset \{\de\in \Phi^+\mid v_1^{-1}w_0\de\in \Phi^+\}.$$
Then Theorem \ref{bound} implies that
$$\sum_{k=2}^n L(s_{i_k})\leq L(v_1).$$
But $v_1=vs_{\be_{i_1}}$ which is strictly less than $v$ by Lemma \ref{simp}. From there we see that $L(v_1)\leq L(v)-L(\be_1)=L(v)-L(s_{i_1})$ the last equality coming from the fact that~$J$ is maximal. Putting everything together, we get 
$$L(v)-L(s_{i_1})=\sum_{k=2}^n L(s_{i_k})\leq L(v_1)\leq L(v)-L(s_{i_1}).$$
It follows that we have equality throughout. Further, $\deg(\xi_{J_1})=L(v_1)$ also implies that $A_{t_{\la_1}^\ast w_0v_1\ta{}{1}{i_2-1}}\in \cC_{w_0}$ by Lemma \ref{condition-incone}. At this stage, we have 
$$J_2\in \fI_{\la_2,v_2,t_\tau}\quand A_{t_{\la_1}^\ast w_0v_1\ta{}{1}{i_2-1}}=A_{t_{\la_2}^\ast w_0v_2\ta{}{1}{i_2}}\in \cC_{w_0}.$$
We can therefore argue by induction to show that for all $\ell\in \{1,\ldots,p\}$ we have 
$$A_{t_{\la_\ell}^\ast w_0v_\ell\ta{}{1}{i_\ell}}\in \cC_{w_0}\quand L(v_\ell)=L(v)-\sum_{k=1}^\ell L(s_{i_k}).$$
In particular, we have $v_p=\id$. But $v_p=vs_{\be_{i_1}}\ldots s_{\be_{i_p}}$ so $v=s_{\be_{i_p}}\ldots s_{\be_{i_1}}=v_J$. 

\medskip

We have also seen that for all $1\leq \ell\leq p-1$
$$\explain{=A_{t_{\la}^\ast w_0v\ta{J}{1}{i_\ell}}}{A_{t_{\la_\ell}^\ast w_0v_\ell\ta{}{1}{i_\ell}}}\in \cC_{w_0}\quand \explain{=A_{t_{\la}^\ast w_0v\ta{J}{1}{i_{\ell+1}-1}}}{A_{t_{\la_\ell}^\ast w_0v_\ell\ta{}{1}{i_{\ell+1}-1}}}\in \cC_{w_0}$$
hence showing that for all $k\leq i_p$ we have $A_{t_{\la}^\ast w_0v\ta{J}{1}{k}}\in \cC_{w_0}$. For the last values of $k$ we simply note that 
$$A_{t_{\la}^\ast w_0v\ta{J}{1}{i_p}}=A_{t_{\la_p}^\ast w_0\ta{}{1}{i_p}}\in \cC_{w_0}\ (\text{since $v_p=\id$})$$
and since $\tau\in P^+$ we must have $A_{t_{\la_p}^\ast w_0\ta{}{1}{k}}\in \cC_{w_0}$ for all $k\geq i_p$. It follows that $J$ is $\la$-dominant.

\medskip

It remains to show that $J$ is admissible. Applying the proof above in the case where $L$ is the usual length function $\ell$ shows that $\ell(v)=p$. We have proved that 
$$\id<vs_{\be_{i_1}}\ldots s_{\be_{i_p}}<  \ldots <v s_{\be_{i_1}}s_{\be_{i_2}}<vs_{\be_{i_1}}<v$$
which implies that
$$\id < s_{\be_{i_p}}< s_{\be_{i_{p}}}s_{\be_{i_{p-1}}}<\ldots<  s_{\be_{i_p}} \ldots s_{\be_{i_{2}}}s_{\be_{i_1}}. $$
This sequence is of length $p+1$ hence it has to be saturated.  This completes the proof of Theorem \ref{main}.

\begin{Cor}
\label{adm-sum}
Let $\la\in P^{+}$ and $\tau \in P^+$. We have 
$$C_{w_0t_\la}\sfs_\tau= C_{w_0t_\la}\sh_{\tau}=\sum_{J} C_{w_0t_\la v_Jt_{\tau}^J}$$
where the sum is taken over all  $\la$-dominant maximal admissible subset $J$. 
\end{Cor}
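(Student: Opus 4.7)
The plan is to combine the expansion derived at the end of Section 4 with the characterization provided by Theorem \ref{main}; once both are in hand, the corollary is pure bookkeeping.

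First, I would recall the formula
$$\bP(\tau)C_{w_0p_\la^-}=\sum_{v\in W_0}\sum_{J\in \fim{\tau}{v}{\la}} C_{\pau{J}{n}{1}vp_\la^-},$$
which was established in Section 4. It comes out of three ingredients already in place: the degree bound of Corollary \ref{firstbound}, the expansion $T_{p_\tau}T_{vp_\la^-}=\sum_{J\in\fis{\tau}{v}{\la}}\xi_J T_{\pau{J}{n}{1}vp_\la^-}$ attached to the fixed reduced expression of $p_\tau$, and bar-invariance of $\bP(\tau)C_{w_0p_\la^-}$. Together these force all sub-maximal contributions into $\cH_{<0}$ and organize the maximal-degree terms (those indexed by $\fim{\tau}{v}{\la}$) into Kazhdan--Lusztig basis elements.

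Second, I would feed Theorem \ref{main} into this expansion. Part (2) of that theorem tells us that whenever $J\in\fim{\tau}{v}{\la}$, we must have $v=v_J$ and $J$ must be admissible, $\la$-antidominant, and maximal; part (1) gives the reverse inclusion, so that every such $J$ actually occurs, in the unique slot $v=v_J$. Reindexing the double sum along this correspondence collapses it into a single sum over admissible, $\la$-antidominant, maximal subsets $J$, with contribution $C_{\pau{J}{n}{1}v_Jp_\la^-}$. This is exactly the statement of the corollary.

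At this stage there is no genuine obstacle: all the substantive analytic work has already been absorbed into Theorem \ref{main}, whose proof went through Proposition \ref{ksimp}, the technical Lemmas \ref{condition-incone} and \ref{stayincone}, and Proposition \ref{ladominant}. The corollary itself is a one-line reindexing once those ingredients are available, so the only thing to verify is that the matching $J \leftrightarrow (v_J,J)$ is bijective, which is immediate from the two parts of Theorem \ref{main}.
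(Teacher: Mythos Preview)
Your proposal is correct and matches the paper's approach exactly: the paper does not even spell out a proof of this corollary, treating it as an immediate consequence of the expansion from the end of Section~4 together with the two directions of Theorem~\ref{main}. Your reindexing argument is precisely the intended one-line derivation.
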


\begin{Rem}
If we are working in the case of equal parameters or when $W$ is not of type $\tC_n$ or $\tA_1$, we can remove the condition of maximality since all parallel hyperplanes have same weights and therefore all admissible subsets are maximal. 
\end{Rem}

\medskip

Any element in $W_e$ can be uniquely written under the form $t_{\wt(w)}\theta(w)$ where $\wt(w)\in P$ is called the weight of $w$ and $\theta(w)\in W_0$. In this section, we have shown that $\theta(v_Jt_{\tau}^J)=\id$ so that if we set~$\mu(J):=\wt(v_Jt_{\tau}^J)$, Corollary \ref{adm-sum} now reads
$$C_{w_0t_\la}\sfs_\tau= C_{w_0t_\la}\sh_{\tau}=\sum_{J} C_{w_0t_{\la+\mu(J)}}$$
where the sum is taken over all  $\la$-dominant maximal admissible subset $J$.

\begin{Rem}
Gaussent and Littelmann \cite{G-L:05} have shown that the Littelmann path model is equivalent to the gallery model and they expressed $m_{\la,\tau}^\mu$ as the number of certain galleries. In \cite{S:06}, Schwer has computed the structure constants of the MacDonald spherical functions $P_\la(\sq^{-1})$ using galleries. The functions $P_\la(\sq^{-1})$ interpolate the Weyl characters and the monomial functions as we have $P_\la(0)=\sfs_\la$ and $P_\la(1)=m_\la$. Schwer then obtained a combinatorial description of $m_{\la,\tau}^\mu$ by describing which of the coefficients in the expansion~$P_{\la}(\bq^{-1})P_{\tau}(\bq^{-1})$ survives the specialisation $\bq^{-1}=0$. The result of Schwer is indeed equivalent to our result but its proof is quite different : it uses the link between the combinatorics of positive folded galleries of Gaussent and  Littelmann and the action of the Bernstein basis on the periodic Hecke module of Lusztig~\cite{Lus1}. Another approach to computing structure constants of MacDonald spherical functions can be found in \cite{Par:06} where Parkinson gives a formula  in terms of intersection cardinalities in affine buildings.
\end{Rem}
%
%
%
%
%

\section{Admissible subsets and Littelmann paths}
In this section we compare our result with the result of Lenart and Postnikov and then explain following~\cite[\S 9]{LP2} how admissible subsets give rise to Lakshmibai-Seshadri paths (which are special kind of Littelmann's paths).

\subsection{Lenart and Postnikov result}

In \cite{LP,LP2}, Lenart and Postnikov introduced the notion of admissible subset in order to compute the product $\sfs_\tau\sfs_\mu$ where $\la,\mu\in P^+$, see Theorem \ref{LP-th}. In this section we explain the connection between our result. 
We start by recalling their definition of admissible subset.  To distinguish between those two definitions, we will say that a subset is LP-admissible if it is admissible in the sense of \cite[Definition 6.1]{LP2}.

\medskip

Let $t_\tau=s_1\ldots s_n a$ be a fixed reduced expression and denote by $H_{\be_i,N_i}$ the hyperplane separating the alcoves $A_{s_1\ldots s_{i-1}}$ and $A_{s_1\ldots s_{i-1}s_i}$. Let~$s'_1\ldots s'_n a^{-1}$ be the reduced expression of $t_{-\tau}$ obtained by inverting the reduced expression above and moving $a^{-1}$ to the left. Then, the hyperplane separating  $A_{s'_1\ldots s'_{i-1}}$ and $A_{s'_1\ldots s'_{i-1}s'_i}$ is $H_{\be'_i,N'_i}$ where $\be'_i=\be_{n-i+1}$ and $N'_i=N_{n-i+1}-\scal{\tau}{\be_{n-i+1}^{\vee}}$. A set $J:=\{j_1,\ldots,j_p\}$ is said to be LP-admissible if the sequence 
$$\id<s_{\be'_{i_1}}<s_{\be'_{i_1}}s_{\be'_{i_2}}<\ldots<s_{\be'_{i_1}}s_{\be'_{i_2}}\ldots s_{\be'_{i_p}}$$
is a saturated sequence in the Bruhat order in $W_0$. 

\begin{Rem}
\ben
\item 
The involution  $i\mapsto n-i+1$ of $\{1,\ldots,n\}$ induces a bijection  $J\mapsto J^\dag$ between admissible subsets in the sense of Definition \ref{admissible} and admissible subsets in the sense of Lenart and Postnikov. 
 \item In \cite[Definition 5.2]{LP2}, the weight of an LP-admissible subset  is defined to be $\wt(t_{-\tau}^{J^\dag})$. The equality~$t_{-\tau}^{J^\dag}(t_{\tau}^J)A_0=A_0$  implies that  $\wt(t_{-\tau}^{J^\dag})=-\wt(v_Jt_{\tau}^J)=-\mu(J)$ where the function $\mu$ is defined at the end of the previous section. We set $\mu(J^\dag)=\mu(J)$. \een
 \end{Rem}

\medskip 
 
Before being able to state the result of Lenart and Postnikov, we need to introduce some more data associated to the LP-admissible subset $J^\dag$.  

\medskip

Let $H_{-\ga_i,\ell_i}$ where $\ga_i\in \Phi^+$ be the hyperplane that separates the alcoves $A_{\tam{J^\dag}{1}{i}}$ and $A_{\tam{J^\dag}{1}{i+1}}$ for all $1\leq i\leq n$. Let $\al$ be a simple root and $\ell_\infty^\al=\scal{\mu(J)}{\al^\vee}$. Following \cite{LP2}, we set  
\begin{align*}
I(J^\dag,\al)&=\{i\mid \ga_i= \al\} \\
 L(J^\dag,\al)&=\left\{\ell_{i}\mid i\in I(J^\dag,\al)\right\}\cup \{\ell_\infty^\al\}\\
 M(J^\dag,\al)&=\max L(J^\dag,\al).
 \end{align*}

\begin{Exa}
We keep the setting of Example \ref{admissible-subset}. Consider the admissible subset $J:=\{3,5,10\}$ and the corresponding LP-admissible subset~$J^\dag:=\{1,6,8\}$. 
We have $\mu(J)=\al_2$ so that $\ell_\infty^{\al_2}=2$ and $\ell_\infty^{\al_1}=-1$. Next we compute
$$I(J^\dag,\al_2)=\{2,6,10\} \quand I(J^\dag,\al_1):=\{1,8\}$$
so that 
$$L(J^\dag,\al_2)=\{0,1,2\} \quand L(J^\dag,\al_1):=\{0,-1\}.$$
Finally we obtain $M(J^\dag,\al_2)=2$ and $M(J^\dag,\al_1)=0$. In the figure below, we have drawn the path $p_{J}(\id;\vec{t}_{-\tau}^{J^\dag})$. The set $I(J^\dag,\al_i)$ indicates when the path is crossing an hyperplane of direction $\al_i$ and the integers $M(J^\dag,\al_i)$ are such that the whole path lies in $H^+_{\al_i,-M(J^\dag,\al_i)}$. Further, these are the minimal integers with this property. 
\begin{figure}[h!]
\begin{center}
\psset{linewidth=.1mm,unit=1.2cm}

\begin{pspicture}(-2.598,-1.5)(.866,1.5)

\psline(-2.598,-1.5)(.866,-1.5)
\psline(-2.598,0)(.866,0)
\psline(-2.598,1.5)(.866,1.5)

\psline(-2.598,1.5)(-2.598,-1.5)
\psline(-1.732,1.5)(-1.732,-1.5)
\psline(-.866,1.5)(-.866,-1.5)
\psline(0,1.5)(0,-1.5)
\psline(.866,1.5)(.866,-1.5)

\psline(-2.598,.5)(-.866,1.5)
\psline(-2.598,-.5)(.866,1.5)
\psline(-2.598,-1.5)(.866,.5)
\psline(-.866,-1.5)(.866,-.5)

\psline(.866,.5)(-.866,1.5)
\psline(.866,-.5)(-2.598,1.5)
\psline(.866,-1.5)(-2.598,.5)
\psline(-.866,-1.5)(-2.598,-.5)

\psline(-2.598,-1.5)(-.866,1.5)
\psline(-.866,-1.5)(.866,1.5)

\psline(-2.598,1.5)(-.866,-1.5)
\psline(-.866,1.5)(.866,-1.5)

\pspolygon[fillstyle=solid,fillcolor=lightgray](0,0)(0,1)(.433,.75)

\psline[linewidth=.3mm](-1.016,-.3)(-.766,-.3)(-.566,-.5)
\pscurve[linewidth=.3mm](-.566,-.5)(-.366,-.6)(-.466,-.4)
\psline[linewidth=.3mm](-.466,-.4)(-.616,-.2)
\pscurve[linewidth=.3mm](-.616,-.2)(-.866,-.15)(-.616,-.1)
\psline[linewidth=.3mm](-.616,-.1)(-.616,.2)(-.466,.4)(-.166,.53)(.134,.53)
\pscurve[linewidth=.3mm](.134,.53)(.264,.5)(.134,.7)

\end{pspicture}
\end{center}
\vspace{-.3cm}
\caption{The path $p_{J}(\id;\vec{t}_{-\tau}^{J^\dag})$}
\end{figure}
\end{Exa}

We are now ready to state Lenart and Postnikov result.
\begin{Th}[\cite{LP2}, Corollary 8.3]
\label{LP-th}
In a simple Lie algebra with Weyl group $W_0$, we have for $\la,\tau\in P^+$ 
\begin{equation}
\label{LP}
\sfs_\tau\sfs_\la=\sum_{J^\dag} \sfs_{\la+\mu(J^\dag)}
\end{equation}
where the sum is over all $LP$-admissible subsets $J^\dag$ satisfying $\scal{\la+\mu(J^\dag)}{\al^\vee}\geq M(J^\dag,\al)$ for all simple root~$\al$.
\end{Th}
We need to check that the condition on $J^\dag$ above is equivalent to $J$ being $\la$-dominant. It will be a consequence of the following  three observations:
\ben
\item  the path  $p_{J}(A_{t_\la};\vec{t}_{-\tau}^{J^\dag})$ lies in the half plane $H^+_{\al,\scal{\la}{\al^\vee}-M(J^\dag,\al)}$ and the integer $\scal{\la}{\al^\vee}-M(J^\dag,\al)$ is maximal with this property;
\item $J$ is $\la$-dominant if and only if  the path $p_{J}(t_\la v_J;\vec{t_\tau})$ lies in $\cC_{\id}$;
\item $p_{J}(t_\la v_J;\vec{t_\tau})$ is obtained from $p_{J}(A_{t_\la};\vec{t}_{-\tau}^{J^\dag})$ by translating all the alcoves by  $\mu(J)$ and reversing the order.
\een
We have
\begin{align*}
p_{J}(t_\la v_J;\vec{t_\tau})\subset \cC_\id &\eq p_{J}(t_\la v_J;\vec{t_\tau})\subset H^+_{\al,0} \text{ for all simple root $\al$}\\
&\eq \left(p_{J}(A_{t_\la};\vec{t}_{-\tau}^{J^\dag})\right)t_{\mu(J)}\subset H^+_{\al,0} \text{ for all simple root $\al$}\\
&\eq p_{J}(A_{t_\la};\vec{t}_{-\tau}^{J^\dag})\subset H^+_{\al,-\scal{\mu(J)}{\al^\vee}} \text{ for all simple root $\al$}\\
&\eq \scal{\la}{\al^\vee}-M(J^\dag,\al)\geq -\scal{\mu(J)}{\al^\vee}\text{ for all simple root $\al$}\\
&\eq \scal{\la+\mu(J)}{\al^\vee}\geq M(J^\dag,\al)\text{ for all simple root $\al$}.
\end{align*}
This theorem is equivalent to Theorem \ref{main} in the equal parameter case when there is no condition on the maximality of $J$.

\subsection{Lakshmibai-Seshadri paths}
We explain following \cite[\S 9]{LP2}, how to construct an Lakshmibai-Seshadri path (LS paths for short) from an admissible subset $J$. There is no new result in this section but this construction exhibits a strong link between LS paths and our approach in the proof of Theorem \ref{main}: compare for instance Figure \ref{firstpaths-1} and  Figure \ref{firstpaths-2}.
\medskip

 A rational $W_a$-path of shape $\mu\in P$ is a pair of sequences $(\un{v},\un{a})$ such that $\un{v}=(v_{0},v_1,\ldots,v_r)$ is a strictly decreasing chain (in the Bruhat order) of minimal length left coset representatives of $\stab_\mu(W_0)$ and $\un{a}=(a_0,a_1,\ldots,a_{r+1})$ is an  increasing sequence of rational numbers such that $a_0=0$ and $a_{r+1}=1$. 

\medskip

We identify $\pi$ with the path $\pi:[0,1]\lra V$ defined by 
$$\pi(t):=\sum_{i=1}^{j-1} (a_i-a_{i-1})v_i\mu+(t-a_{j-1})v_j\mu \text{ for $a_{j-1}\leq t\leq a_j$}.$$
The weight of $\pi$ is equal to $\pi(1)$.

\medskip
\newcommand{\vlra}{\xrightarrow{\hspace*{.5cm}}}  

A rational $W_a$-path of shape $\mu$ is called a Lakshmibai-Seshadri path if there exists a sequence of positive roots~$(\de_1,\ldots,\de_{r})$ such that 
$$v_0>v_1=v_0s_{\de_1}>v_2=v_0s_{\de_1}s_{\de_2}>\ldots>v_r=v_0s_{\de_1}\ldots s_{\de_r},$$
$\ell(v_i)=\ell(v_{i-1})-1$ and $a_i\scal{v_i\mu}{\de_i^\vee}\in \nZ$.

\begin{Rem}
Our definition of LS paths is slightly different from the one of Littelmann in \cite{Lit1} but nearly obviously equivalent. One only needs to notice that if $a=a_q=a_{q+1}=\ldots=a_{q+r}$ then the chain $(v_q,\ldots,v_{q+r})$ is an $a$-chain as defined by Littelmann.
\end{Rem}

\medskip

For the rest of this section we will need to work with a specific reduced expression of $t_\tau$. This choice is important in the proof of the third statement of Theorem \ref{LP2-9}; see \cite[\S 9]{LP2}.  Fix a total order on the set of simple roots $\{\al_1,\ldots,\al_N\}$ and write $\{\om_1,\ldots,\om_N\}$ for the corresponding fundamental weights. We define the map 
 $$\barr
\bh:& H(A_0,A_{t_\tau}) &\lra& \nR^{N+1}\\
&H_{\de,k}&\lmt& \frac{1}{\scal{\tau}{\de^\vee}}\left(k,\scal{\om_1}{\de^\vee},\ldots,\scal{\om_{N}}{\de^\vee} \right).
\ear$$
The lexicographic order on $\nR^{N+1}$ induces a total order on the set $H(A_0,A_\tau)$. Let 
$$H(A_0,A_\tau)=\{H_{\be_1,N_1},\ldots,H_{\be_n,N_n}\}$$
be such that $H_{\be_i,N_i}<H_{\be_{i+1},N_{i+1}}$ for all $i$. Then 
there exists a reduced expression of $t_\tau$ of the form $s_1\ldots s_n a$ where $a\in \Pi$, $s_i\in S$ and such that $H_{\be_i,N_i}$ is the hyperplane separating $A_{s_1\ldots s_{i-1}}$ and $A_{s_1\ldots s_i}$. \\

Let $J=\{i_1,\ldots,i_p\}$ be an admissible subset as in Definition \ref{admissible} and consider the pair $(0,v_J)\in P\times W_0$. Let~$(\la_0,\ldots,\la_{p})$, $(v_0,\ldots,v_p)$ and $(J_0,\ldots,J_p)$ be the sequences associated to $(0,v_J)$ constructed in Section \ref{section5}. Since $J$ is admissible we have $v_{\ell}=s_{\be_{i_p}}\ldots s_{\be_{i_\ell+1}}=v_{J_{\ell}}$.
We set 
\bem
\item $\tau_\ell=v_{J_\ell}\tau$ for $0\leq \ell\leq p$ ,
\item $\pi_\ell:[0,1]\lra V$ to be the straight path defined by $t\lmt  \mu_\ell+t\cdot \tau_\ell$,
\item  $a_\ell=\dfrac{N_{i_{\ell}}}{\scal{\tau }{{\be^{\vee}_{i_{\ell}}}}}$ for $1\leq \ell\leq p$.
\eem
Finally, we set $\un{v}=(v_{J_0},\ldots,v_{J_p})$ and $\un{a}=(a_0,\ldots,,a_{p+1})$ where $a_0=0$ and $a_{p+1}=1$. By construction, the path~$\pi=(\un{v},\un{a})$ coincide with the path~$\pi_\ell$ for all $a_{\ell}\leq t\leq a_{\ell+1}$. 
Then according to \cite[\S 9]{LP2} one can show the following result where once again, we assume that we are in the equal parameter case. The choice of our specific reduced expression for $t_\tau$ plays a crucial role in the proof of Statement (3). Recall the definition of $\mu(J)$ at the end of Section \ref{section5}.

\begin{Th}
\label{LP2-9}
Let $J$ be an admissible subset, $\la\in P^+$ and  $\pi=(\un{v_J},\un{a})$ be the $W_a$-rational path defined above. We have
\ben
\item  $\pi$ is a LS-path of shape $\tau$ and weight $\mu(J)$.
\item The set of LS-paths of shape $\tau$ is in bijection with the set of admissible subsets. 
\item $\pi$ is $\la$-dominant (i.e. $\la+\pi(t)\in \ov{\cC_{\id}}$ for all $t\in [0,1]$) if and only if the 
path  $p_J(t_\la v_J;t_{\tau})\subset \cC_{\id}$.
\een
\end{Th}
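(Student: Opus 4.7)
The plan is to establish the three statements by direct verification of the LS-path axioms from admissibility (Statement~1), by composing the bijection $J\mapsto J^\dag$ with Lenart--Postnikov's bijection between LP-admissible subsets and LS-paths (Statement~2), and by matching the hyperplane crossings of the path against those of the alcove walk (Statement~3). The specific reduced expression of $p_\tau$ coming from the $\bh$-ordering is the main technical device enabling the last step.

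For Statement~1, I would check each of the LS-path conditions in turn. Admissibility of $J$ says $\si_{J_{k-1}}=\si_{\be_{i_k}}\si_{J_k}$ with $\ell(\si_{J_{k-1}})=\ell(\si_{J_k})+1$; applying the conjugation identity $\si_\be\si=\si\si_{\be\si}$, this rewrites as $\si_{J_k}=\si_{J_{k-1}}\si_{\ga_{i_k}}$, and the observation after Definition~\ref{domset} gives $\ga_{i_k}\in\Phi^+$. Hence $(\si_{J_0},\ldots,\si_{J_p})$ is a strictly decreasing saturated chain reflecting through positive roots, as required. For the integrality condition, $W_0$-invariance of the pairing gives $\scal{\tau\si_{J_k}}{\ga_{i_k}^\vee}=\scal{\tau}{\be_{i_k}^\vee}$, so $a_k\scal{\tau\si_{J_k}}{\ga_{i_k}^\vee}=N_{i_k}\in\nZ$ by definition of $a_k$. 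Finally, the weight identity $\pi(1)=\mu(J)$ follows from a telescoping argument: continuity of $\pi$ at each breakpoint $a_k$ forces $\mu_k-\mu_{k-1}=a_k(\tau_{k-1}-\tau_k)=-N_{i_k}\ga_{i_k}$, and comparing the resulting $\pi(1)=\mu_p+\tau\si_J$ with the gallery computation of $\mu(J)$ at the end of Section~6 gives the equality.

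For Statement~2, I would invoke the known bijection of Lenart--Postnikov between LP-admissible subsets and LS-paths of shape $\tau$ \cite[\S 9]{LP2} and compose it with our $J\mapsto J^\dag$. The only thing to check is that the LS-path constructed directly from $J$ here coincides with the LP-path attached to $J^\dag$; using the relations $\be'_i=\be_{n-i+1}$ and $N'_i=N_{n-i+1}-\scal{\tau}{\be_{n-i+1}^\vee}$ between the two reduced expressions, one verifies that the Bruhat chain $\un\si$, the roots $\ga_{i_k}$, and the rational parameters $\un a$ on both sides match.

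For Statement~3, the key observation is that the reduced expression induced by the lexicographic order on $\bh(H(A_0,A_\tau))$ forces the hyperplanes crossed by the translated path $t\mapsto \la+\pi(t)$ to coincide, in the correct order, with those separating consecutive alcoves in the walk $(\pau{J}{k}{1}v_JA_\la)_{k=0}^{n}$ (using Proposition~\ref{ksimp} to identify each crossing). Once this identification is in place, the forward implication is geometric: if every alcove $\pau{J}{k}{1}v_JA_\la$ lies in $\cC_0$, then each linear segment of the path lies in the closure of one such alcove, hence in $\ov{\cC_0^+}$. The converse is proved contrapositively: if some alcove leaves $\ov{\cC_0^+}$ at index $k$, then the corresponding segment of $\la+\pi$ exits $\ov{\cC_0^+}$ by crossing a wall of $\cC_0^+$. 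The main obstacle is to rigorously establish this matching of hyperplane crossings, because the path may pass through codimension-$\geq 2$ facets; it is exactly to avoid such pathologies that the $\bh$-ordered reduced expression was introduced, and tracking the lexicographic order through Proposition~\ref{ksimp} should resolve it.
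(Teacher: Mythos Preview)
The paper does not actually prove this theorem: immediately before the statement it writes ``Then according to \cite[\S 9]{LP2} one can show the following result'', and no further argument is given. So there is no proof in the paper to compare against; your sketch is in fact more detailed than what the paper supplies, and it is essentially the argument of \cite[\S 9]{LP2} transported through the dictionary $J\leftrightarrow J^\dag$ set up in Section~6.

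One small point worth tightening in your outline of Statement~1: you do not verify that the sequence $\un a=(a_0,\ldots,a_{p+1})$ is (weakly) increasing, which is part of the definition of a rational $\Om$-path. This is precisely where the $\bh$-ordered reduced expression already enters, not only in Statement~3 as you suggest: since the $\bh$-order is lexicographic with first coordinate $N_i/\scal{\tau}{\be_i^\vee}$, the condition $i_1<\cdots<i_p$ forces $a_1\le\cdots\le a_p$. You should also note, for completeness, that the chain $(\si_{J_0},\ldots,\si_{J_p})$ lives in the distinguished coset representatives of $\stab_\tau(\Om_0)$ (or pass to the quotient chain when $\tau$ is non-regular), as required by the definition of an LS-path. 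With these two additions your sketch matches the content of \cite[\S 9]{LP2} that the paper cites.
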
 

As a direct consequence of this theorem and  Theorem \ref{main}, we obtain for $\la,\tau\in P^+$
$$ C_{ w_0t_\la}\sh_{\tau}=\sum_{\pi}C_{w_0 t_{\la+\pi(1)}}$$ 
where the sum is over all $\la$-dominant LS-paths of shape $\tau$.

\begin{Exa}
Let $W$ be of type $G_2$ and keep the notations of the previous examples. We compute the different elements needed in the construction of the path $\pi_J$ where $J=\{3,5,10\}$. Note that most of these have already been constructed in Example \ref{sequences-G2}.

\renewcommand{\arraystretch}{1.2}%

$$\begin{array}{|c|c|c|c|c|c|c|c|c|}\hline
\ell& J_\ell&\be_{i_\ell}  & v_{J_\ell}&\tau_\ell&N_{i_\ell}&a_{\ell}\\\hline
0&	\{3,5,10\} & &   s_{\al_1}s_{\al_2}s_{\al_1}&-\al_1&&0\\
1&	\{5,10\} &\de_4 & s_{\al_2}s_{\al_1}&\al_1&1&1/2\\
2&\{10\} &\de_5&\si_{\al_1}&\al_1+3\al_2&2&2/3\\	
3&\emptyset&\de_6 &\id&2\al_1+3\al_2&1&1\\\hline
\end{array}$$

\medskip

The LS-path  $\pi_J$ associated to $J$ is represented by a thick line in Figure \ref{firstpaths-2}. The paths $\pi_1$, $\pi_2$ and $\pi_3$ are represented by dashed lines. We see that that
\ben
\item[$\ast$] $\pi_J$ follows the direction $-\al_1$ for a time $1/2$;
\item[$\ast$] $\pi_J$ follows the direction $\al_1$ for a time $1/6$;
\item[$\ast$] $\pi_J$ follows the direction $\al_1+3\al_2$ for a time $1/3$.
\een

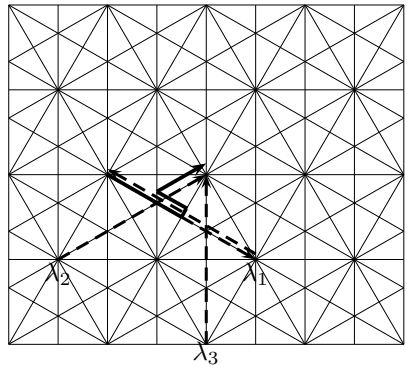
\begin{figure}[h!]
\begin{center}
\psset{unit=.75cm}
\begin{pspicture}(-4.5,-3)(4.5,3)
\psset{linewidth=0.05mm}
\psline(3.464,3)(-3.464,3)
\psline(3.464,-3)(-3.464,-3)
\psline(3.464,3)(3.464,-3)
\psline(-3.464,3)(-3.464,-3)

\psline(3.464,1.5)(-3.464,1.5)
\psline(3.464,0)(-3.464,0)
\psline(3.464,-1.5)(-3.464,-1.5)

\psline(2.598,3)(2.598,-3)
\psline(1.732,3)(1.732,-3)
\psline(.866,3)(.866,-3)
\psline(0,3)(0,-3)
\psline(-2.598,3)(-2.598,-3)
\psline(-1.732,3)(-1.732,-3)
\psline(-.866,3)(-.866,-3)

\psline(1.732,3)(3.464,2)
\psline(0,3)(3.464,1)
\psline(-1.732,3)(3.464,0)
\psline(-3.464,3)(3.464,-1)
\psline(-3.464,2)(3.464,-2)
\psline(-3.464,1)(3.464,-3)
\psline(-3.464,0)(1.732,-3)
\psline(-3.464,-1)(0,-3)
\psline(-3.464,-2)(-1.732,-3)

\psline(-1.732,3)(-3.464,2)
\psline(0,3)(-3.464,1)
\psline(1.732,3)(-3.464,0)
\psline(3.464,3)(-3.464,-1)
\psline(3.464,2)(-3.464,-2)
\psline(3.464,1)(-3.464,-3)
\psline(3.464,0)(-1.732,-3)
\psline(3.464,-1)(0,-3)
\psline(3.464,-2)(1.732,-3)

\psline(-3.464,0)(-1.732,-3)
\psline(-3.464,3)(0,-3)
\psline(-1.732,3)(1.732,-3)
\psline(0,3)(3.464,-3)
\psline(1.732,3)(3.464,0)

\psline(3.464,0)(1.732,-3)
\psline(3.464,3)(0,-3)
\psline(1.732,3)(-1.732,-3)
\psline(0,3)(-3.464,-3)
\psline(-1.732,3)(-3.464,0)

\psline[linewidth=.5mm,linecolor=black](-1.732,0)(-.433,-.75)
\psline[linewidth=.5mm,linecolor=black](-.433,-.75)(-.352,-.6)
\psline[linewidth=.5mm,linecolor=black](-.866,-.3)(-.332,-.6)
\psline[linewidth=.5mm,linecolor=black]{->}(-.866,-.3)(0,0.2)

\psline[linewidth=.4mm,linecolor=black,linestyle=dashed]{->}(-1.732,0)(.866,-1.5)
\psline[linewidth=.4mm,linecolor=black,linestyle=dashed]{<-}(-1.732,0.1)(.866,-1.4)
\psline[linewidth=.4mm,linecolor=black,linestyle=dashed]{->}(-2.598,-1.5)(0,0)
\psline[linewidth=.4mm,linecolor=black,linestyle=dashed]{->}(0,-3)(0,0)

\rput(-2.598,-1.75){\scalebox{1}{$\la_2$}}

\rput(.866,-1.75){\scalebox{1}{$\la_1$}}

\rput(0,-3.15){\scalebox{1}{$\la_3$}}

\end{pspicture}
\end{center}
\caption{LS path associated to the set $J=\{3,5,10\}$.}
\label{firstpaths-2}
\end{figure}

When doing the above procedure for all admissible subsets $J$ (described in Example~\ref{admissible-subset}) we obtain the paths described in Figure~\ref{14-LS}. 

\medskip

\begin{figure}[h!]
\begin{center}
\psset{unit=.8cm}
\begin{pspicture}(-4.5,-3)(4.5,3)
\psset{linewidth=0.05mm}

\pspolygon[fillcolor=lightgray,fillstyle=solid](0,0)(0,1)(0.433,0.75)
\psline[linewidth=.5mm,linecolor=black]{->}(0,0)(0,3)

\psline[linewidth=.5mm,linecolor=black]{->}(0,0)(2.598,1.5)

\psline[linewidth=.5mm,linecolor=black]{->}(0,0)(0,-3)

\psline[linewidth=.5mm,linecolor=black]{->}(0,0)(2.598,-1.5)
\psline[linewidth=.5mm,linecolor=black](1.3,-.75)(1.38,-.6)
\psline[linewidth=.5mm,linecolor=black]{<-}(0.1,.15)(1.4,-.6)
\psline[linewidth=.5mm,linecolor=black]{->}(.866,-.3)(1.732,0.2)

\psline[linecolor=black,linewidth=.5mm]{->}(-.866,.5)(.866,1.5)
\psline[linecolor=black,linewidth=.5mm]{->}(-1.732,1)(-.866,1.5)
\psline[linecolor=black,linewidth=.5mm]{->}(0,0)(-2.598,1.5)

\psline[linecolor=black,linewidth=.5mm]{->}(0,0)(-2.598,-1.5)
\psline[linecolor=black,linewidth=.5mm]{->}(-.866,-.5)(.866,-1.5)
\psline[linecolor=black,linewidth=.5mm]{->}(-1.732,-1)(-.866,-1.5)
\psline[linecolor=black,linewidth=.5mm](-.35,-.6)(-.433,-.75)
\psline[linecolor=black,linewidth=.5mm]{->}(-.35,-.6)(-1.732,.2)
\psline[linecolor=black,linewidth=.5mm](-.866,-.3)(-.866,-.12)
\psline[linecolor=black,linewidth=.5mm]{<-}(-.2,.2)(-.896,-.14)

\psline(3.464,3)(-3.464,3)
\psline(3.464,-3)(-3.464,-3)
\psline(3.464,3)(3.464,-3)
\psline(-3.464,3)(-3.464,-3)

\psline(3.464,1.5)(-3.464,1.5)
\psline(3.464,0)(-3.464,0)
\psline(3.464,-1.5)(-3.464,-1.5)

\psline(2.598,3)(2.598,-3)
\psline(1.732,3)(1.732,-3)
\psline(.866,3)(.866,-3)
\psline(0,3)(0,-3)
\psline(-2.598,3)(-2.598,-3)
\psline(-1.732,3)(-1.732,-3)
\psline(-.866,3)(-.866,-3)

\psline(1.732,3)(3.464,2)
\psline(0,3)(3.464,1)
\psline(-1.732,3)(3.464,0)
\psline(-3.464,3)(3.464,-1)
\psline(-3.464,2)(3.464,-2)
\psline(-3.464,1)(3.464,-3)
\psline(-3.464,0)(1.732,-3)
\psline(-3.464,-1)(0,-3)
\psline(-3.464,-2)(-1.732,-3)

\psline(-1.732,3)(-3.464,2)
\psline(0,3)(-3.464,1)
\psline(1.732,3)(-3.464,0)
\psline(3.464,3)(-3.464,-1)
\psline(3.464,2)(-3.464,-2)
\psline(3.464,1)(-3.464,-3)
\psline(3.464,0)(-1.732,-3)
\psline(3.464,-1)(0,-3)
\psline(3.464,-2)(1.732,-3)

\psline(-3.464,0)(-1.732,-3)
\psline(-3.464,3)(0,-3)
\psline(-1.732,3)(1.732,-3)
\psline(0,3)(3.464,-3)
\psline(1.732,3)(3.464,0)

\psline(3.464,0)(1.732,-3)
\psline(3.464,3)(0,-3)
\psline(1.732,3)(-1.732,-3)
\psline(0,3)(-3.464,-3)
\psline(-1.732,3)(-3.464,0)

\end{pspicture}
\end{center}
\caption{LS paths in type $G_2$.}
\label{14-LS}
\end{figure}
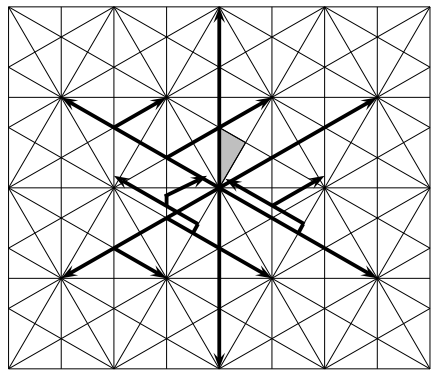
\end{Exa}

{\bf Acknowledgement:} The author would like to thank C\'edric Lecouvey and James Parkinson for many helpful discussions and for pointing out interesting references. Thanks also to the referees for suggesting many interesting modifications.

\noindent
{\small \textsc{ Jeremie Guilhot: Institut Denis Poisson, 
Universit\'e de Tours,
Universit\'e d'Orl\'eans, 
CNRS, Tours, France.}
 }\\
{\it Email address} \url{ jeremie.guilhot@idpoisson.fr}

\end{document}